\documentclass[a4paper,11pt,twoside,reqno]{amsart}

\usepackage{amsfonts,amsthm,amsmath,latexsym,amssymb, esint, graphicx, upref}
\usepackage[english]{babel}
\usepackage[utf8]{inputenc}
\usepackage[svgnames]{xcolor}
\usepackage[T1]{fontenc}

\definecolor{azul}{rgb}{0.1,0.6,0.86}
\usepackage[colorlinks=true,linkcolor=azul,citecolor=azul,urlcolor=azul]{hyperref}

\theoremstyle{plain}
\newtheorem{teo}{Theorem}[section]
\newtheorem{lem}[teo]{Lemma}
\newtheorem{cor}[teo]{Corollary}
\newtheorem{pro}[teo]{Proposition}
\newtheorem{claim}{Claim}
\newtheorem{teoremaintro}{Theorem}

\newtheorem{charintro}{Corollary}

\theoremstyle{definition}
\newtheorem{fed}{Definition}[section]

\theoremstyle{remark}
\newtheorem{rem}[teo]{Remark}

\DeclareFontFamily{U}{BOONDOX-cal}{\skewchar\font=45 }
\DeclareFontShape{U}{BOONDOX-cal}{m}{n}{
  <-> s*[1.05] BOONDOX-r-calo}{}
\DeclareFontShape{U}{BOONDOX-cal}{b}{n}{
  <-> s*[1.05] BOONDOX-b-calo}{}
\DeclareMathAlphabet{\mathcalboondox}{U}{BOONDOX-cal}{m}{n}
\SetMathAlphabet{\mathcalboondox}{bold}{U}{BOONDOX-cal}{b}{n}
\DeclareMathAlphabet{\mathbcalboondox}{U}{BOONDOX-cal}{b}{n}

\newcommand{\Z}{\mathbb{Z}}
\newcommand{\R}{\mathbb{R}}

\newcommand{\E}{\mathbb{E}}
\renewcommand{\H}{\mathcal{H}}

\DeclareMathOperator{\Per}{Per}
\DeclareMathOperator{\Var}{Var}
\DeclareMathOperator{\Cov}{Cov}
\DeclareMathOperator{\sinc}{sinc}
\DeclareMathOperator{\divergence}{div}

\begin{document}

\title{Functions of Bounded Variation and Point Processes}

\author{J.~Antezana}
\address{Departament de Matemàtiques i Informàtica, Universitat de Barcelona, Barcelona, Spain and
Centre de Recerca Matemàtica, Barcelona, Spain}
\email{\href{mailto:jorge.antezana@ub.edu}{\texttt{jorge.antezana@ub.edu}}}

\author{M.~Levi}
\address{Departament de Matemàtiques i Informàtica, Universitat de Barcelona, Barcelona, Spain.}
\email{\href{mailto:matteo.levi@ub.edu}{\texttt{matteo.levi@ub.edu}}}

\author{J.~Marzo}
\address{Departament de Matemàtiques i Informàtica, Universitat de Barcelona, Barcelona, Spain and
Centre de Recerca Matemàtica, Barcelona, Spain}
\email{\href{mailto:jmarzo@ub.edu}{\texttt{jmarzo@ub.edu}}}

\author{J.~Ortega-Cerdà}
\address{Departament de Matemàtiques i Informàtica, Universitat de Barcelona, Barcelona, Spain and
Centre de Recerca Matemàtica, Barcelona, Spain}
\email{\href{mailto:jortega@ub.edu}{\texttt{jortega@ub.edu}}}

%\date{\today}

\thanks{The authors are supported by the grant PID2024-160033NB-I00 funded by MICIU/AEI/10.13039/501100011033 and by FEDER, UE
\\
JOC is also supported by the 2024 ICREA 00142 grant by the Generalitat de
Catalunya}

\begin{abstract}
We investigate the relationship between the analytical properties of functions of bounded variation  and the statistical behavior of hyperuniform point processes. We establish several formulas for the jump part of the gradient of a bounded variation function, extending and unifying previous results by Beretti--Gennaioli and Dávila. In particular, we provide new expressions for the $L^2$-jump of the gradient using both difference quotients and Fourier transform methods. 

Furthermore, we connect these analytic structures to the theory of hyperuniform point processes. By analyzing the variance of linear statistics associated with bounded variation functions, we provide asymptotic estimates that depend on the specific classification of the hyperuniformity of the point process. The results show how the regularity and jump discontinuities of a function dictate the growth rate of fluctuations in point processes.

Finally, we introduce an averaged quadratic BMO-type oscillation functional over translated and rotated cube partitions, similar to one recently studied by Ambrosio et al., and prove, using results from point processes, that it converges to an explicit dimensional constant times the $L^2-$jump, giving in particular a further new characterization of the perimeter of a set.
\end{abstract}

\maketitle

\section{Introduction and main results}

Let $\mathrm{BV}(\R^d)$ denote the space of functions of bounded variation, i.e., the space of functions $u\in L^1(\R^d)$ whose distributional gradient $Du$ is a finite vector-valued Radon measure, or equivalently, for which the following semi-norm is finite,
\[
\|u\|_{\mathrm{BV}}=\sup \left\{\int_{\R^d} u(x)\,\divergence  \varphi(x) \, dx: \,\varphi\in C^1_c(\R^d;\R^d),\quad \|\varphi\|_{\infty}\leq 1\right\}.
\]

This paper investigates several asymptotic characterizations of the total variation and of the jump part of the derivative of a function of bounded variation. We first relate the Bourgain--Brezis--Mironescu--Dávila nonlocal formula for the $BV$ seminorm to a recent Fourier characterization of finite-perimeter sets due to Beretti and Gennaioli. This yields a streamlined proof of their criterion. We then prove a family of Fourier and nonlocal difference-quotient formulas which recover the $L^2$-size of the jump part of $Du$. These analytic results are applied to the asymptotic variance of linear statistics of hyperuniform point processes. Conversely, using ideas from point-process theory, we obtain a new averaged BMO-type characterization of the jump part of $Du$.

We now introduce the main objects and state our results.

\subsection{BV seminorms, jumps and perimeters}
The space $W^{1,1}(\R^d)$, which consists of functions whose distributional gradient is absolutely continuous with respect to the Lebesgue measure and integrable, is a subset of $\mathrm{BV}(\R^d)$, and the inclusion is strict since, for instance, $W^{1,1}(\R^d)$ does not contain any characteristic function of any set of positive measure, while $\mathrm{BV}(\R^d)$ does.

A Lebesgue measurable set $\Omega\subset \R^d$ for which $\chi_\Omega\in \mathrm{BV}(\R^d)$ is called a (global) Caccioppoli set and,
in this case, $\|\chi_\Omega\|_{\mathrm{BV}}$ is called the (global) perimeter of $\Omega$, which we will also denote by $\Per(\Omega)$, a terminology justified by the structure theorem of De Giorgi (see Appendix \ref{Appendix}). Observe that, according to our definition, a Caccioppoli set has always finite measure.

The $\mathrm{BV}$ semi-norm also admits the following expression, which does not explicitly involve distributional derivatives.

\begin{teoremaintro}[\cite{Davila, Brezis}]\label{teo Davila}
    Let $\rho:\R^d\to\R$ be an integrable radial function\footnote{In the original theorem in \cite{Davila, Brezis}, $\rho$ is assumed to be positive, but one can split $\rho = \rho^+ - \rho^-$ and apply the original result to $\rho^+/\|\rho^+ \|_1$ and $\rho^-/\|\rho^- \|_1$ to extend their result to functions $\rho$ that are not necessarily positive.}
with integral one, let $\rho_L(x)=L^d\rho(L x)$. If $u\in BV(\R^d)$,
\begin{equation}\label{eq Davila}
 \lim_{L \to \infty} \iint_{\R^d\times\R^d}
\frac{\bigl|u(x)-u(y)\bigr|}{|x-y|}\,\rho_L(x-y)\,dx\,dy =K_{d}\|u\|_{\mathrm{BV}},
\end{equation} 
where 
\[
K_{d} = \fint_{\mathbb S^{d-1}} |\langle e, \omega\rangle| \,
d\sigma_{d-1}(\omega) =\frac{ \Gamma(d/2)}{\sqrt{\pi}\Gamma\left(\frac{d+1}{2}\right)}, \] 
and $d\sigma_{d-1}$ is the  surface measure on the unit sphere.
\end{teoremaintro}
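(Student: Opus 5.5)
We prove \eqref{eq Davila} first for positive $\rho$ and then pass to signed $\rho$ by the splitting in the footnote. The case $\|\rho^-\|_1=0$ is the positive case itself. Otherwise both parts are nonzero, each is radial, and $\rho_L=\|\rho^+\|_1(\rho^+/\|\rho^+\|_1)_L-\|\rho^-\|_1(\rho^-/\|\rho^-\|_1)_L$ with $\|\rho^+\|_1-\|\rho^-\|_1=1$. The limit is linear in these pieces.

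\textbf{Reduction to directional difference quotients.} Substitute $y=x+h$ and then $h=t\omega/L$ in polar coordinates, with $t>0$ and $\omega\in\mathbb S^{d-1}$. Writing $\rho(h)=\rho_0(|h|)$, the double integral becomes
\[
\int_0^\infty\!\!\int_{\mathbb S^{d-1}} \rho_0(t)\,t^{d-1}\, \Phi_u\!\left(\tfrac{t\omega}{L}\right) d\sigma_{d-1}(\omega)\,dt,
\qquad \Phi_u(h)=\int_{\R^d}\frac{|u(x+h)-u(x)|}{|h|}\,dx .
\]
Two standard BV facts now do the work.
\begin{enumerate}
\item The uniform bound $\Phi_u(h)\le \int |\langle Du,h/|h|\rangle|\le\|u\|_{\mathrm{BV}}$ holds for every $h\neq0$.
\item The pointwise limit $\Phi_u(s\omega)\to |\langle Du,\omega\rangle|(\R^d)$ holds as $s\to0^+$. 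This is the classical characterization of BV through difference quotients.
\end{enumerate}
For (i), first prove it for $u\in C_c^\infty$ using $u(x+h)-u(x)=\int_0^1\langle\nabla u(x+sh),h\rangle ds$. Then extend to $u\in\mathrm{BV}$ by strict approximation: mollifiers $u_\varepsilon=u*\eta_\varepsilon$ satisfy $u_\varepsilon\to u$ in $L^1$ and $|\langle Du_\varepsilon,e\rangle|(\R^d)\le|\langle Du,e\rangle|(\R^d)$. For (ii), the lower bound comes from lower semicontinuity. Test against $\varphi\in C_c^1$ with $|\varphi|\le1$ and use $\int \frac{u(x+s\omega)-u(x)}{s}\varphi(x)dx=-\int u\,\frac{\varphi(x)-\varphi(x-s\omega)}{s}dx\to-\int u\,\partial_\omega\varphi=\int\varphi\, d\langle Du,\omega\rangle$. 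The upper bound is (i).

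\textbf{Dominated convergence and the constant.} Since $\rho_0(t)t^{d-1}$ is integrable on $(0,\infty)\times\mathbb S^{d-1}$ and $\Phi_u$ is bounded by (i), dominated convergence gives the limit
\[
\int_0^\infty\rho_0(t)t^{d-1}dt\int_{\mathbb S^{d-1}}|\langle Du,\omega\rangle|(\R^d)\,d\sigma_{d-1}(\omega)=\fint_{\mathbb S^{d-1}}|\langle Du,\omega\rangle|(\R^d)\,d\sigma_{d-1}(\omega).
\]
The equality uses $\|\rho\|_1=\sigma_{d-1}(\mathbb S^{d-1})\int_0^\infty \rho_0 t^{d-1}dt=1$.

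To evaluate the spherical average, use the polar decomposition $Du=\nu\,|Du|$ with $|\nu|=1$ $|Du|$-a.e. Then $|\langle Du,\omega\rangle|(\R^d)=\int|\langle\nu(x),\omega\rangle|\,d|Du|(x)$. By Tonelli and rotation invariance of $\sigma_{d-1}$, $\fint|\langle\nu(x),\omega\rangle|d\sigma(\omega)=K_d$ for each $x$. This yields $K_d\,|Du|(\R^d)=K_d\|u\|_{\mathrm{BV}}$.

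Finally, compute $K_d$ in coordinates with $e=e_d$. The density of $\omega_d$ on the sphere is proportional to $(1-s^2)^{(d-3)/2}$, so
\[
K_d=\frac{\int_{-1}^1|s|(1-s^2)^{(d-3)/2}ds}{\int_{-1}^1(1-s^2)^{(d-3)/2}ds}.
\]
Evaluating both integrals with Beta functions gives $\Gamma(d/2)/(\sqrt\pi\,\Gamma(\frac{d+1}2))$.

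The main obstacle is step (ii): the pointwise convergence of directional difference quotients for genuinely BV functions, as opposed to $W^{1,1}$ ones. This is exactly where the lower-semicontinuity and duality argument, together with the approximation argument for (i), are needed. The remaining steps are bookkeeping.
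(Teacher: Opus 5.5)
Your proof is correct. It is also more than the paper provides: the paper does not prove this statement but quotes it from Dávila and Brezis. Its only contribution is the footnote extending the result from positive to signed kernels by splitting $\rho=\rho^+-\rho^-$ and normalizing each part.

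You instead give a self-contained proof of the whole-space case. You rewrite the double integral as $\int \rho(z)\,\Phi_u(z/L)\,dz$ and reduce everything to two facts, both justified correctly:
\begin{itemize}
\item the uniform bound $\Phi_u(h)\le |Du|(\R^d)$, obtained by mollification;
\item the pointwise limit $\Phi_u(s\omega)\to|\langle Du,\omega\rangle|(\R^d)$, whose lower bound comes from duality against $C^1_c$ test functions.
\end{itemize}
Dominated convergence, the polar decomposition $Du=\nu|Du|$ and Tonelli then give $K_d|Du|(\R^d)$, and your Beta-function evaluation of $K_d$ is right. This translation-based argument is what the whole-space setting allows. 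As the paper notes, Dávila's original theorem is on bounded Lipschitz domains, where one cannot translate freely.

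Your route also gives something you did not use. The dominating function $|\rho_0(t)|t^{d-1}\|u\|_{\mathrm{BV}}$ is integrable as soon as $\rho\in L^1$, with no sign condition. So dominated convergence handles signed $\rho$ directly, and you only need $\int\rho=1$ at the end. The splitting step, which mirrors the footnote, is therefore superfluous in your argument.

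Two small slips:
\begin{itemize}
\item The mollified functions $u*\eta_\varepsilon$ lie in $C^\infty\cap W^{1,1}$ but not in $C^\infty_c$. State the fundamental-theorem-of-calculus bound for that class; the proof is identical.
\item The density $(1-s^2)^{(d-3)/2}$ for $\omega_d$ is valid only for $d\ge 2$. The case $d=1$ is trivial, with $K_1=1$.
\end{itemize}
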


This result appeared as an open question in a renowned paper by Bourgain, Brezis and Mironescu \cite{BBM}, and was later proved by Dávila \cite{Davila}.\footnote{In \cite{Davila}, the author proved the result for a more general family of mollifiers, but in this work we will keep our attention on this special type of family, which are dilations of one single function.} Strictly speaking, Dávila proved the result for $\mathrm{BV}(\Omega)$, with $\Omega$ a bounded open subset of $\R^d$ with Lipschitz boundary. For a direct proof of the version stated here, where $\Omega = \mathbb{R}^d$, see, for instance, \cite[Theorem 3]{Brezis}.

Observe that Theorem~\ref{teo Davila} in particular provides the following formula for the perimeter of Caccioppoli sets of finite measure: 

%if $\Omega\subset \R^d$ has finite measure and \eqref{liminf Davila} holds for $u=\chi_\Omega$, then $\Omega$ is a Caccioppoli set, and conversely, if $\Omega$ is a Caccioppoli set then \eqref{eq Davila} holds for $u=\chi_\Omega$, hence providing a formula for $\Per(\Omega)=\Vert \chi_\Omega\Vert_{BV}$.

\begin{charintro}\label{char Davila}
Let $\rho_L$ as in Theorem~\ref{teo Davila}. If $\Omega$ is a Caccioppoli set,
\begin{equation}\label{eq Davila caccio limit}
 \lim_{L \to \infty} \iint_{\R^d\times\R^d}
\frac{\bigl|\chi_{\Omega}(x)-\chi_{\Omega}(y)\bigr|}{|x-y|}\,\rho_L(x-y)\,dx\,dy=K_{d}\Per(\Omega).
\end{equation} 
\end{charintro}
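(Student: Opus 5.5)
This corollary is the special case $u=\chi_\Omega$ of Theorem~\ref{teo Davila}, so the plan is to verify the hypotheses and then identify the right-hand side.

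\textbf{Step 1: hypotheses.} By the paper's definition, a Caccioppoli set $\Omega$ satisfies $\chi_\Omega\in \mathrm{BV}(\R^d)$. This includes $\chi_\Omega\in L^1(\R^d)$, i.e.\ $|\Omega|<\infty$, together with finiteness of the seminorm. So $u=\chi_\Omega$ is an admissible input in Theorem~\ref{teo Davila}. The mollifier $\rho$ is the same integrable radial function with integral one, so no further condition needs checking.

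\textbf{Step 2: apply the theorem.} Substituting $u=\chi_\Omega$ into \eqref{eq Davila} gives exactly the left-hand side of \eqref{eq Davila caccio limit}, and its limit equals $K_d\|\chi_\Omega\|_{\mathrm{BV}}$.

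\textbf{Step 3: identify the constant term.} By definition, $\Per(\Omega)=\|\chi_\Omega\|_{\mathrm{BV}}$, so the limit is $K_d\Per(\Omega)$.

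\textbf{Possible subtleties.} The only point needing any care is the footnote extension to sign-changing $\rho$. There we write $\rho=\rho^+-\rho^-$, note that $\rho^\pm$ are radial and integrable, and use linearity of the double integral in $\rho$. When $\|\rho^-\|_1=0$ the splitting is trivial. Otherwise the two normalized limits combine with total weight $\|\rho^+\|_1-\|\rho^-\|_1=1$.

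Since the theorem is assumed, none of these steps is a genuine obstacle; the argument is a direct specialization.
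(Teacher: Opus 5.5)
Your proposal is correct and matches the paper, which presents this corollary simply as the specialization $u=\chi_\Omega$ of Theorem~\ref{teo Davila} combined with the definition $\Per(\Omega)=\|\chi_\Omega\|_{\mathrm{BV}}$. Finite measure is built into the paper's notion of Caccioppoli set, so $\chi_\Omega$ is an admissible input, and your remark on sign-changing $\rho$ just reproduces the paper's footnote.
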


Recently, Beretti and Gennaioli \cite{BerGen2024} gave an alternative formula for the perimeter of Caccioppoli sets of finite measure, in terms of Fourier transforms of characteristic functions of such sets.
\begin{charintro}[{\cite[Corollary 1.4]{BerGen2024}}]\label{char Beretti}
If $\Omega\subset \R^d$ is a Caccioppoli set,
\begin{equation}\label{eq perimetro con fourier 1}
\lim_{R\to\infty} \frac {2\pi^2}{R} \int_{B(0,R)} |\xi|^2 |\widehat{\chi_\Omega}(\xi)|^2\, d\xi=\Per(\Omega).
\end{equation} 
\end{charintro}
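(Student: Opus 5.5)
The plan is to derive Corollary~\ref{char Beretti} from Corollary~\ref{char Davila} by choosing the radial kernel $\rho$ so that the Dávila functional becomes exactly the truncated Fourier integral. For a characteristic function, $|\chi_\Omega(x)-\chi_\Omega(y)| = |\chi_\Omega(x)-\chi_\Omega(y)|^2$. Hence, after the change of variables $y=x-h$,
\[
\iint \frac{|\chi_\Omega(x)-\chi_\Omega(y)|}{|x-y|}\rho_L(x-y)\,dx\,dy=\int_{\R^d} \frac{\rho_L(h)}{|h|}\,\|\chi_\Omega(\cdot)-\chi_\Omega(\cdot-h)\|_2^2\,dh .
\]
By Plancherel,
\[
\|\chi_\Omega-\chi_\Omega(\cdot-h)\|_2^2=\int |\widehat{\chi_\Omega}(\xi)|^2\,|1-e^{-2\pi i\langle h,\xi\rangle}|^2\,d\xi=\int |\widehat{\chi_\Omega}(\xi)|^2\, 2\bigl(1-\cos(2\pi\langle h,\xi\rangle)\bigr)\,d\xi .
\]
Applying Fubini (everything is nonnegative if $\rho\ge0$; otherwise split as in the footnote), the functional equals
\[
\int |\widehat{\chi_\Omega}(\xi)|^2\, m_L(\xi)\,d\xi,\qquad m_L(\xi)=\int \frac{\rho_L(h)}{|h|}\,2\bigl(1-\cos 2\pi\langle h,\xi\rangle\bigr)\,dh .
\]

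The key step is to choose $\rho$ so that $m_L(\xi)$ is proportional to $\frac{|\xi|^2}{L}\chi_{B(0,L)}(\xi)$, or to something that is asymptotically equivalent. Set $g(h)=\rho(h)/|h|$, so that $\rho_L(h)/|h|=L^{d+1}g(Lh)$. Then $m_L(\xi)=2L\bigl(\int g-\widehat g(\xi/L)\bigr)$, which shows $m_L(\xi)=L\,M(\xi/L)$ with $M=2(\widehat g(0)-\widehat g)$. We want $M(\eta)=c|\eta|^2\chi_{B(0,1)}(\eta)$. The natural attempt is to solve $\widehat g(\eta)=\widehat g(0)-\tfrac c2|\eta|^2\chi_{B}(\eta)$, but this is not the Fourier transform of an integrable function decaying at infinity. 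Instead I would try to realise the condition through the second-order structure: $-\Delta$ of a function with transform $\chi_B$ corresponds to $|\eta|^2\chi_B$ up to $4\pi^2$. So I would take $g$ with $\widehat g(0)-\widehat g(\eta)$ comparable to $|\eta|^2\chi_B$. Concretely, set $G=\mathcal F^{-1}(\chi_B)$ and then define $g$ by $\widehat g(\eta)=\int_{|\zeta|\ge|\eta|}\cdots$, i.e.\ a radial profile $\widehat g(\eta)=\phi(|\eta|)$ with $\phi(0)-\phi(r)=\tfrac c2 r^2$ for $r<1$ and $\phi$ constant for $r\ge1$. Constancy for $r\ge 1$ forces $\phi\equiv\phi(1)$ there, which produces a Dirac mass, and that is incompatible with $g\in L^1$.

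So an exact identity is unrealistic. Instead I would sandwich the sharp cutoff. For a general radial $\rho$, the Dávila limit gives $\lim_L \int |\widehat{\chi_\Omega}|^2 L M(\xi/L)\,d\xi = K_d\Per(\Omega)$ with $\int\rho=1$. Since $\rho$ may be signed, this yields the limit for every linear combination of such kernels. The obstacle is approximating the multiplier $|\eta|^2\chi_B(\eta)$, together with the normalisation $\int\rho$, by multipliers of the form $M$ with errors controlled uniformly. I would control the tail using the a priori bound $\frac1R\int_{B(0,R)}|\xi|^2|\widehat{\chi_\Omega}|^2\lesssim \Per(\Omega)$, which follows from the BV bound $\|\chi_\Omega-\chi_\Omega(\cdot-h)\|_1\le|h|\Per(\Omega)$ by averaging over $h$ in a ball of radius $1/R$. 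I would also use monotone (Tauberian-type) arguments in $R$: the function $R\mapsto\int_{B_R}|\xi|^2|\widehat{\chi_\Omega}|^2$ is increasing, so knowing averages of it against a smooth family of kernels determines its limit divided by $R$, as in Wiener–Karamata.

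Finally I would check the constant: computing $M$ for the chosen kernels and comparing $\int\rho\,|\langle e,\omega\rangle|$-type averages should give $K_d$ on the Dávila side against $2\pi^2$ on the Fourier side. The Tauberian step, which converts Dávila-type smooth averages into the sharp ball cutoff, is the one I expect to be the main difficulty.
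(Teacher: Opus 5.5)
Your reduction is the same one the paper uses. For a characteristic function, the Dávila functional equals $\int_{\R^d}|\widehat{\chi_\Omega}(\xi)|^2\,L\,M(\xi/L)\,d\xi$ with $M=2(\widehat g(0)-\widehat g)$ and $g=\rho/|x|$; this is Lemma~\ref{divided} applied to $\eta_L$ in the proof of Theorem~\ref{thm:smooth}. There is a genuine gap, however: the proposal stops where the actual work begins. You never exhibit admissible kernels $\rho$ whose multipliers sandwich $|\eta|^2\chi_{B(0,1)}(\eta)$. You also never compute how the normalisation $\int\rho=1$ translates into a normalisation of $M$, so the constant $2\pi^2$ is never obtained. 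The step you call the main difficulty (a Wiener--Karamata argument) is left undone, so as written this is not a proof.

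The missing idea is hidden in your obstruction analysis, which is slightly off. The constant value of $\widehat g$ for $r\ge 1$ must equal $\widehat g(0)$, and the Dirac mass appears only if that value is nonzero. So choose $g$ with zero mean. For any smooth, even, compactly supported $\tilde g$ on $\R$ with $\tilde g(0)=0$, put $G=\tilde g(|\cdot|)$, $g=C\,\widehat G$ (radial, Schwartz, with $\int g=C\,G(0)=0$) and $\rho=|x|\,g\in L^1$. Then $M=-2C\,\tilde g(|\cdot|)$ exactly, and the normalisation is
\[
C^{-1}=\int_{\R^d}|x|\,\widehat G(x)\,dx=\frac{1}{2\pi}(-\Delta)^{1/2}G(0)=-\frac{\Gamma\bigl(\frac{d+1}{2}\bigr)}{\pi^{3/2}\,\Gamma\bigl(\frac d2\bigr)}\int_0^\infty\frac{\tilde g(t)}{t^2}\,dt .
\]
Combined with $K_d$ from Corollary~\ref{char Davila}, this gives
\[
\lim_{L\to\infty}L\int_{\R^d}|\widehat{\chi_\Omega}(\xi)|^2\,\tilde g(|\xi|/L)\,d\xi=\frac{\Per(\Omega)}{2\pi^2}\int_0^\infty\frac{\tilde g(t)}{t^2}\,dt
\]
whenever the last integral is nonzero. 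The only real obstruction for the sharp profile $t^2\chi_{[0,1]}(t)$ is its lack of smoothness, which makes $|x|\,\widehat G\notin L^1$.

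Once you have this family, no Tauberian theorem and no a priori bound are needed. Pick smooth, even, nonnegative, compactly supported $f\le t^2\chi_{[-1,1]}\le h$, vanishing at $0$, with $\int_0^\infty (h-f)\,t^{-2}\,dt<\varepsilon$. Since $|\widehat{\chi_\Omega}|^2\ge 0$, the $\liminf$ and $\limsup$ of $R^{-1}\int_{B(0,R)}|\xi|^2|\widehat{\chi_\Omega}(\xi)|^2\,d\xi$ are squeezed between $\frac{\Per(\Omega)}{2\pi^2}\int_0^\infty f\,t^{-2}\,dt$ and $\frac{\Per(\Omega)}{2\pi^2}\int_0^\infty h\,t^{-2}\,dt$. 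These bounds differ by at most $\varepsilon\,\Per(\Omega)/(2\pi^2)$, and letting $\varepsilon\to 0$ concludes.
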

We point out that the convention we use for the Fourier transform (in the above theorem and in the rest of the paper) is the following: given $f$ in the Schwartz class $\mathcal{S}(\R^d)$, we write
\[
\widehat f(\xi)=\int_{\R^d} e^{-2\pi i x\cdot\xi}\,f(x)\,dx \quad\text{and}\quad f(x)=\int_{\R^d} e^{2\pi i x\cdot\xi}\,\widehat f(\xi)\,d\xi.
\]
With this convention, Plancherel's identity reads as
\[
\int_{\R^d} |f(x)|^2\,dx = \int_{\R^d} |\widehat f(\xi)|^2\,d\xi.
\]
Our first contribution is to show that Corollary~\ref{char Beretti}, which the authors of \cite{BerGen2024} obtained with an independent method, can be in fact deduced from the well known Corollary~\ref{char Davila}. Hence, our first result is the following.
\begin{claim}\label{From Davila to BerGenn}
    The formula \eqref{eq perimetro con fourier 1} appearing in Corollary \ref{char Beretti} can be deduced from \eqref{eq Davila caccio limit} appearing in Corollary \ref{char Davila}.
\end{claim}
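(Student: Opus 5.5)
Since $|\chi_\Omega(x)-\chi_\Omega(y)|=|\chi_\Omega(x)-\chi_\Omega(y)|^2$, I would rewrite the left-hand side of \eqref{eq Davila caccio limit} via Plancherel. Set $g=\rho/|\cdot|$ and $g_L(h)=\rho_L(h)/|h|=L^{d+1}g(Lh)$. Substituting $y=x+h$ and using $\|\chi_\Omega(\cdot+h)-\chi_\Omega\|_2^2=\int|e^{2\pi i h\cdot\xi}-1|^2|\widehat{\chi_\Omega}(\xi)|^2d\xi$, the Dávila integral becomes
\[
\int_{\R^d}|\widehat{\chi_\Omega}(\xi)|^2\,m_L(\xi)\,d\xi,\qquad m_L(\xi)=2\int g_L(h)\bigl(1-\cos(2\pi h\cdot\xi)\bigr)dh=2L\bigl(\widehat g(0)-\widehat g(\xi/L)\bigr).
\]
Footnote~1 allows $\rho$ to change sign, so I am free to design the radial function $\widehat g$. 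I would take $\widehat g(0)-\widehat g(\eta)=\psi(|\eta|)$, where $\psi(s)=s^2\varphi(s)$ for a nice profile $\varphi$. Then $\rho(h)=|h|g(h)$ must be integrable; this needs $\widehat g$ smooth away from the origin with enough decay. The normalization $\int\rho=1$ is obtained by dividing by the constant $c_\varphi=\int |h|g(h)\,dh$.

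Next I would introduce the increasing function $F(R)=\int_{B(0,R)}|\xi|^2|\widehat{\chi_\Omega}(\xi)|^2d\xi$. In polar form the quantity above is $\frac{2}{L}\int_0^\infty\varphi(r/L)\,dF(r)$. Integrating by parts, it becomes an average of $F(Ls)/(Ls)$ against the kernel $k_\varphi(s)=-2s\varphi'(s)$ (boundary terms vanish because $F(R)\lesssim R\,\Per(\Omega)$). That bound follows from $\|\chi_\Omega(\cdot+h)-\chi_\Omega\|_2^2\le|h|\Per(\Omega)$ by a standard dyadic argument. Corollary~\ref{char Davila} then gives, for every admissible $\varphi$,
\[
\lim_{L\to\infty}\int_0^\infty \frac{F(Ls)}{Ls}\,k_\varphi(s)\,ds=\frac{K_d}{c_\varphi}\Per(\Omega).
\]

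To pass from these averaged limits to the pointwise limit of $F(R)/R$, I would use a Tauberian argument. By linearity, the averages are known for every kernel in the span of the admissible $k_\varphi$. Since $F$ is monotone and $F(R)/R$ is bounded, I can squeeze $F(R)/R$ between averages against kernels approximating $\frac{1}{\delta}\chi_{[1,1+\delta]}$ and $\frac1\delta\chi_{[1-\delta,1]}$. Monotonicity gives $F(R)\le F(Ls)$ for $s\ge R/L$, which lets the upper and lower averages control $F(R)/R$ up to an $O(\delta)$ error. Letting $\delta\to0$ then gives $\lim F(R)/R=\Per(\Omega)/(2\pi^2)$.

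I expect the main obstacle to be twofold. First, the kernels must be realized by genuinely integrable $\rho$; nonsmooth profiles such as $\min(1,s^{-2})$ give slowly decaying $g$, so I would first use smooth $\varphi$ and then approximate in $L^1(ds)$, exploiting the uniform bound on $F(R)/R$. Second, the constant $K_d/c_\varphi$ must equal $\frac{1}{2\pi^2}\int k_\varphi$ for every admissible $\varphi$. I would check this by the rotation-averaging identity $\fint_{\mathbb S^{d-1}}|\langle e,\omega\rangle|\,d\sigma_{d-1}=K_d$ combined with a one-dimensional computation of $\int |h|\,g(h)\,dh$ in terms of $\psi''(0)$. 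Alternatively, one can note that both sides are linear functionals of $\varphi$ and calibrate them on a single explicit example.
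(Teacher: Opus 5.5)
Your proposal is correct, and its first half is the paper's argument. Because $|\chi_\Omega(x)-\chi_\Omega(y)|=|\chi_\Omega(x)-\chi_\Omega(y)|^2$, Plancherel (Lemma~\ref{divided}) turns Dávila's integral for a sign-changing radial $\rho=|h|\,g$ into the multiplier integral $2L\int_{\R^d}|\widehat{\chi_\Omega}(\xi)|^2\psi(|\xi|/L)\,d\xi$. For smooth compactly supported profiles this is exactly Theorem~\ref{thm:smooth}.

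The routes diverge in how you reach the nonsmooth multiplier $t^2\chi_{[0,1]}(t)$.
\begin{itemize}
\item The paper sandwiches it between smooth nonnegative $f\le g\le h$ with $\int_0^\infty (h-f)t^{-2}\,dt\le\varepsilon$ and uses positivity of $|\widehat{\chi_\Omega}|^2$. It needs no a priori bound and no integration by parts.
\item You integrate by parts to get averages of $F(Ls)/(Ls)$ against $k_\varphi=-2s\varphi'(s)$. You then extend to all of $L^1(0,\infty)$ by density and squeeze using monotonicity of $F$. The density step works because every $k\in C_c^\infty((0,\infty))$ is some $k_\varphi$: take $\varphi(s)=\int_s^\infty k(t)\,dt/(2t)$, which is constant near $0$ and compactly supported, so $g$ is Schwartz.
\end{itemize}
The two arguments are dual, since monotonicity of $F$ is positivity of $dF=|\xi|^2|\widehat{\chi_\Omega}|^2\,d\xi$. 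Yours additionally needs $\sup_R F(R)/R<\infty$ to control the $L^1$ approximation. In exchange it proves more: $\int_0^\infty \frac{F(Ls)}{Ls}\,k(s)\,ds$ converges for every $k\in L^1(0,\infty)$.

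The constant bookkeeping needs fixing in four places.
\begin{itemize}
\item With $\rho=|h|g/c_\varphi$ normalized, Dávila's integral equals $c_\varphi^{-1}\int|\widehat{\chi_\Omega}|^2 m_L$. So your averages converge to $c_\varphi K_d\Per(\Omega)$, not to $K_d\Per(\Omega)/c_\varphi$, which is not even linear in $\varphi$.
\item The identity you must verify is $K_d\,c_\varphi=\frac{1}{\pi^2}\int_0^\infty\varphi(s)\,ds=\frac{1}{2\pi^2}\int_0^\infty k_\varphi(s)\,ds$. Your rotation-averaging reduction does prove it. However, the one-dimensional step uses $|t|=\pi^{-2}\int_0^\infty(1-\cos 2\pi t\xi)\,\xi^{-2}\,d\xi$. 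This expresses the first absolute moment through $\int_0^\infty\psi(\xi)\,\xi^{-2}\,d\xi$, not through $\psi''(0)$, which only sees the second moment. This is precisely the paper's computation \eqref{comp}.
\item Linearity alone does not justify calibrating on a single example; you first need proportionality to $\int k_\varphi$. That does hold: the limiting functional is invariant under $k\mapsto a^{-1}k(\cdot/a)$, i.e.\ translation invariant in $\log s$, hence a multiple of $\int k$. After that, $g(h)=e^{-\pi|h|^2}$ fixes the constant.
\item When $\int_0^\infty\varphi=0$ you have $c_\varphi=0$ and cannot normalize. There you should use the unnormalized form $(\int\rho)\,K_d\Per(\Omega)$ of Dávila's limit, as your linearity remark implicitly does.
\end{itemize}
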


We point out that formula \eqref{eq perimetro con fourier 1} in Corollary~\ref{char Beretti} was deduced in \cite{BerGen2024} from the following more general result.

\begin{teoremaintro}[{\cite[Theorem 1.2]{BerGen2024}}]\label{teo Beretti}
Let $u\in \mathrm{BV}(\R^d)\cap  L^\infty(\R^d)$. Then,
\begin{equation}\label{eq BeGe}
\lim_{R\to\infty} \frac {2\pi^2}{R} \int_{B(0,R)} |x|^2 |\widehat{u}(x)|^2\, dx=\mathcal{J}(u).
\end{equation} 
\end{teoremaintro}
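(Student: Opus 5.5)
The plan is to deduce Theorem~\ref{teo Beretti} in two stages. First I would prove a quadratic version of Theorem~\ref{teo Davila}. Then I would pass from difference quotients to Fourier truncations using Plancherel and a Tauberian argument. Throughout, $\mathcal J(u)=\int_{J_u}|u^+-u^-|^2\,d\mathcal H^{d-1}$ is the $L^2$-jump.

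\emph{Step 1 (quadratic Dávila formula).} For $u\in \mathrm{BV}(\R^d)\cap L^\infty(\R^d)$ and $\rho$ radial, integrable, with integral one, I would show
\[
\lim_{L\to\infty}\iint \frac{|u(x)-u(y)|^2}{|x-y|}\rho_L(x-y)\,dx\,dy = K_d\,\mathcal J(u).
\]
\begin{itemize}
\item Write $y=x+h$ and $h=t\omega$. The integral becomes $\int_{\mathbb S^{d-1}}\int_0^\infty \rho_L(t)\,t^{d-1}\,\Phi_\omega(t)\,dt\,d\sigma(\omega)$, where $\Phi_\omega(t)=t^{-1}\int|u(x+t\omega)-u(x)|^2dx$.
\item Slice along lines parallel to $\omega$. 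The restrictions are one-dimensional BV functions, and I claim that for a 1D BV function $v$ one has $t^{-1}\int|v(s+t)-v(s)|^2ds\to\sum_{\text{jumps}}|v^+-v^-|^2$ as $t\to0$.
\item To prove the claim, split $Dv$ into its atomic and diffuse parts. Use $|v(s+t)-v(s)|\le|Dv|((s,s+t])$ together with the fact that a diffuse finite measure has a uniformly small mass on intervals of length $t$. The diffuse contributions and the cross terms are then $o(1)$, while each atom contributes its squared jump.
\item A domination $\Phi_\omega(t)\le 2\|u\|_\infty\|u\|_{\mathrm{BV}}$ lets me use dominated convergence in the line variable and in $\omega$. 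The limit of the $t$-average is unaffected, since $\rho_L$ concentrates at $t=0$ with total mass one.
\item The slicing (coarea) formula for $J_u$ turns the limit into $\int_{\mathbb S^{d-1}}\int_{J_u}|u^+-u^-|^2|\langle\nu_u,\omega\rangle|\,d\mathcal H^{d-1}\,d\sigma(\omega)$ (normalized). Averaging over $\omega$ gives $K_d\,\mathcal J(u)$.
\end{itemize}
This step is the main obstacle: the careful control of the diffuse (absolutely continuous plus Cantor) part, uniformly enough to integrate over lines and directions.

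\emph{Step 2 (Fourier side).} By Plancherel,
\[
\int|u(x+h)-u(x)|^2dx=4\int\sin^2(\pi h\cdot\xi)|\widehat u(\xi)|^2d\xi.
\]
Hence the functional of Step 1 equals $4L\int\psi(\xi/L)|\widehat u(\xi)|^2d\xi$, where $\psi(\eta)=\int\rho(z)|z|^{-1}\sin^2(\pi z\cdot\eta)\,dz$ is radial. Introduce the increasing function $F(R)=\int_{B(0,R)}|\xi|^2|\widehat u|^2d\xi$ and write $\psi(\eta)=|\eta|^2g(|\eta|)$. The functional is then $\frac4L\int_0^\infty g(r/L)\,dF(r)$. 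Step 1 says that these normalized averages of $dF$ converge to $K_d\mathcal J(u)$, for every admissible kernel $g$ arising from some $\rho$.

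\emph{Step 3 (Tauberian conclusion).} I would choose a one-parameter family of $\rho$'s, for instance Gaussians or their radial combinations; signed $\rho$ are allowed by the footnote to Theorem~\ref{teo Davila}. The aim is for the corresponding $g$'s to generate, after taking linear combinations, kernels approximating $\chi_{[0,1]}$ from above and below in the sense needed. Alternatively, choose $\rho$ so that $g(r)=e^{-r^2}$ up to constants, which gives a Laplace–Stieltjes transform of $F(\sqrt{\cdot})$.

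Since $F$ is monotone, Karamata's Tauberian theorem, or a direct sandwich argument with kernels approximating indicators, yields $F(R)/R\to c_d K_d\mathcal J(u)$. The constant $c_d$ is computed explicitly from $\psi$, and the computation must produce exactly $1/(2\pi^2K_d)$; as a consistency check, this is the constant that matches Corollary~\ref{char Beretti} when $u=\chi_\Omega$. The result is \eqref{eq BeGe}.
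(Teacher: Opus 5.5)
Your proof is correct, but it does not follow the paper, because the paper never proves Theorem~\ref{teo Beretti}. It imports the result from \cite{BerGen2024} and runs the implications in the opposite direction to yours: Theorem~\ref{teo Beretti} $\Rightarrow$ Proposition~\ref{formulas Jump1} $\Rightarrow$ Theorem~\ref{Jump con Fourier} $\Rightarrow$ Theorem~\ref{Jump con diferencias}, i.e.\ from Fourier truncations to difference quotients.

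The paper only reproves the special case $u=\chi_\Omega$ (Section~\ref{from davila}), and there it uses exactly your Steps 2--3:
\begin{itemize}
\item Lemma~\ref{divided} is your Plancherel identity;
\item a kernel $\rho=C|x|\,\widehat{g(|\cdot|)}$ realizes a prescribed smooth, compactly supported radial multiplier;
\item $t^2\chi_{[0,1]}$ is sandwiched between two such multipliers.
\end{itemize}
Your Step 1 is replaced there by Dávila's linear formula (Corollary~\ref{char Davila}). That suffices only because $|\chi_\Omega(x)-\chi_\Omega(y)|^2=|\chi_\Omega(x)-\chi_\Omega(y)|$.

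Your quadratic Dávila formula is therefore exactly the extra ingredient needed to extend the paper's Section~\ref{from davila} argument to all of $\mathrm{BV}\cap L^\infty$. You prove it by the 1D atomic/diffuse splitting, the BV slicing theorem and the area formula on $J_u$. This yields a proof independent of \cite{BerGen2024}. The cost is the fine slicing theory of BV plus a Tauberian step; the paper's argument takes a few lines but only reaches characteristic functions.

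Three points to tighten.

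\emph{Step 3, first option.} Drop the vague first option. Either reuse the paper's sandwich verbatim, or make the Gaussian option explicit. For the sandwich, note that Step 1 is linear in $\rho$, with limit $(\int\rho)K_d\mathcal{J}(u)$ for any integrable radial $\rho$, so no normalization is needed. For the Gaussian option, $G(\eta)=|\eta|^2e^{-|\eta|^2}$ is realized by $\rho(z)=-2c|z|\widehat{G}(z)$.

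\emph{Signed kernels.} This $\rho$ is integrable but necessarily sign-changing, since $\rho/|z|=-2c\widehat G$ has integral $-2cG(0)=0$. So Step 1 must be stated for signed $\rho$. Your dominated-convergence argument gives this directly, without appealing to the footnote.

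\emph{The constant.} By \eqref{comp}, $\int|z|\widehat G(z)\,dz=-\frac{1}{\pi^2K_d}\int_0^\infty e^{-t^2}\,dt$, so $\int\rho=1$ forces $c=\pi^{3/2}K_d$. Karamata's theorem with index $1/2$ then gives $F(R)/R\to\frac{2}{\sqrt\pi}\cdot\frac{K_d\mathcal{J}(u)}{4c}=\frac{\mathcal{J}(u)}{2\pi^2}$. The case $\mathcal{J}(u)=0$ follows from the elementary bound $F(R)\le e\int_0^\infty e^{-r^2/R^2}\,dF(r)$. So the constant can be computed outright rather than matched to Corollary~\ref{char Beretti}.
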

In order to explain what $\mathcal{J}(u)$ stands for in the above theorem, we shall recall that for $u\in BV(\R^d)$, according to the Lebesgue decomposition theorem, the measure $Du$ admits a unique decomposition into two mutually singular parts: $Du = D^au + D^su$, where
$D^au=\nabla u \, \mathcal{L}^d$ is absolutely continuous with respect to the Lebesgue measure $\mathcal{L}^d$, and $D^s u$ is the singular part. The latter further splits into two mutually singular measures, finally giving the gradient decomposition (see \cite[Sec.3.7]{AFP00})
\begin{equation}\label{Decomposition of gradient - compact}
    Du=\nabla u \, \mathcal{L}^d+D^ju+D^cu.
\end{equation}
The term $D^j u$ is usually called the jump part of the gradient, and it is, informally speaking, supported on the points where the function jumps, hence the name, across a $(d-1)$-dimensional surface. The term $\mathcal{J}(u)$ appearing in Theorem~\ref{teo Beretti} denotes the $L^2$-jump of the function $u$, that is, loosely speaking, the $L^2$ norm of $D^ju$. In particular, if $\Omega$ is a Caccioppoli set of finite measure, $\mathcal{J}(\chi_\Omega)=\Per(\Omega)$, which clarifies how \eqref{eq perimetro con fourier 1} is a special case of \eqref{eq BeGe}.
%In particular, if $u$ is the characteristic function of a smooth set $E$ (which in particular is a Caccioppoli set), $D^j u$ is the Hausdorff $d-1$-dimensional measure, $\mathcal H^{d-1}$, restricted to the boundary of $E$.  
We refer the reader to the Appendix \ref{Appendix} (and to the references cited there) for a more precise definition of $\mathcal{J}(u)$ and for a more rigorous discussion about the decomposition \eqref{Decomposition of gradient - compact}, which includes also a brief explanation of $D^cu$, the so called Cantor part of the gradient, which does not play any direct role in this paper.

It is worth noting the possibly surprising fact that Dávila's asymptotic formula \eqref{eq Davila} and the $L^2$ asymptotic formula \eqref{eq BeGe}, although equivalent when applied to characteristic functions of Caccioppoli sets, lead to two different quantities when applied to arbitrary functions of bounded variation: to the total variation of the whole gradient, the first one, and to the $L^2$-norm of the jump part of the gradient, the second one.

A natural question is then whether or not there exist formulas for the norm of the jump in terms of difference quotients, variations of Dávila's formula \eqref{eq Davila}. This result
%and also the application to point processes
is obtained as a consequence of the following generalization of Beretti--Gennaioli formula \eqref{eq BeGe}.

\begin{teo}\label{Jump con Fourier}
Let $\mathcalboondox{s}:[0,\infty)\to\R$ be a bounded continuous function which, for some $\alpha\ge 1$, satisfies
\[
\mathcalboondox{s}(t)=t^\alpha + o(t^\alpha) \quad{\text{as } t\to 0}.
\]

If $u\in \mathrm{BV}(\R^d)\cap  L^\infty(\R^d),$ then: 
\begin{enumerate}
\item[i.)] If $\alpha>1$
\begin{equation}\label{eq Fourier Jump1}
\lim_{R\to\infty} R  \int_{\R^d} |\widehat{u}(\omega)|^2 \mathcalboondox{s}(|\omega|/R)\, d\omega =   \left( \int_0^\infty \frac{\mathcalboondox{s}(t)}{t^2} \, dt\right) \, \frac{\mathcal{J}(u)}{2\pi^2}\,.
\end{equation}

\item[ii.)] If  $\alpha=1$
\begin{equation}\label{eq Fourier Jump2}
\lim_{R\to\infty}\frac{R}{\log R} \int_{\R^d} |\widehat{u}(\omega)|^2 \mathcalboondox{s}(|\omega|/R)\, d\omega = \frac{\mathcal{J}(u)}{2\pi^2}  .
\end{equation}
\end{enumerate}
\end{teo}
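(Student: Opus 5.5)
We will deduce Theorem \ref{Jump con Fourier} from the Beretti--Gennaioli formula (Theorem \ref{teo Beretti}) by an Abelian/Tauberian-type summation argument in the radial variable. Set
\[
F(r)=\int_{B(0,r)} |\omega|^2|\widehat u(\omega)|^2\,d\omega ,
\]
so that Theorem \ref{teo Beretti} says $F(r)=\frac{\mathcal J(u)}{2\pi^2}\,r+o(r)$ as $r\to\infty$. Write $\mu$ for the measure $|\omega|^2|\widehat u(\omega)|^2\,d\omega$ pushed forward to $[0,\infty)$ by $\omega\mapsto|\omega|$, so that $F(r)=\mu([0,r))$. Then
\[
\int_{\R^d}|\widehat u(\omega)|^2\,\mathcalboondox{s}(|\omega|/R)\,d\omega=\int_0^\infty \frac{\mathcalboondox{s}(r/R)}{r^2}\,dF(r).
\]
The idea is to integrate by parts and insert $F(r)\approx cr$, with $c=\mathcal J(u)/(2\pi^2)$.

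\textbf{Integration by parts.} Take first $g(r)=\mathcalboondox{s}(r/R)/r^2$ smooth; the general bounded continuous case follows by approximation, since only the behaviour at $0$ and boundedness matter. We get
\[
\int_0^\infty g\,dF=-\int_0^\infty F(r)g'(r)\,dr .
\]
The boundary terms vanish: at $\infty$ because $F(r)/r^2\to0$, and at $0$ because $F(r)\lesssim \|u\|_1^2 r^{d+2}$.

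Rather than differentiating $\mathcalboondox{s}$, it is cleaner to avoid integration by parts altogether. Write $\mathcalboondox{s}(t)/t^2=\int_t^\infty h$ in a weak sense, or use the layer-cake/Fubini representation
\[
\int g\,dF=\int_0^\infty\bigl(F(\infty)-F(r)\bigr)\ldots ,
\]
but $F(\infty)=\infty$. So we instead proceed by splitting. Fix $0<\delta<M$ and split the $r$-range into $[0,A]$, $[A,\delta R]$, $[\delta R, MR]$ and $[MR,\infty)$.

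\textbf{Case $\alpha>1$.} The piece $[MR,\infty)$ contributes at most $\|\mathcalboondox{s}\|_\infty\int_{MR}^\infty r^{-2}\,dF\lesssim 1/(MR)$, using the summation by parts estimate $\int_T^\infty r^{-2}dF\le 2\sup_{r\ge T} F(r)/r\cdot T^{-1}\cdot O(1)$. The piece $[0,\delta R]$ uses $|\mathcalboondox{s}(t)|\le Ct^\alpha$, giving
\[
R^{-\alpha}\int_0^{\delta R} r^{\alpha-2}\,dF\lesssim R^{-\alpha}(\delta R)^{\alpha-1}=\delta^{\alpha-1}R^{-1}.
\]
After multiplying by $R$, both tails are $O(M^{-1}+\delta^{\alpha-1})$. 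On the middle range, substitute $r=Rt$; the rescaled measures $R^{-1}dF(Rt)$ converge vaguely to $c\,dt$ on $(0,\infty)$, since $F(Rt)/R\to ct$ locally uniformly. Therefore
\[
R\int_{\delta R}^{MR}\frac{\mathcalboondox{s}(r/R)}{r^2}\,dF=\int_\delta^M \frac{\mathcalboondox{s}(t)}{t^2}\,\frac{dF(Rt)}{R}\longrightarrow c\int_\delta^M\frac{\mathcalboondox{s}(t)}{t^2}\,dt .
\]
Letting $\delta\to0$ and $M\to\infty$ gives \eqref{eq Fourier Jump1}; the integral converges precisely because $\alpha>1$.

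\textbf{Case $\alpha=1$.} Now $\mathcalboondox{s}(t)/t^2\sim 1/t$ near $0$, and the mass concentrates on the range $A\le r\le\delta R$. There
\[
R\int \frac{\mathcalboondox{s}(r/R)}{r^2}\,dF=(1+o_\delta(1))\int_A^{\delta R}\frac{dF(r)}{r}.
\]
Summation by parts with $F(r)=cr+o(r)$ gives
\[
\int_A^{\delta R} \frac{dF}{r}=c\log(\delta R/A)+o(\log R).
\]
The remaining ranges contribute $O(1)$: the range $[\delta R,\infty)$ gives $O_\delta(1)$ by the same estimates as in the previous case, and $[0,A]$ is fixed. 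Dividing by $\log R$ and then letting $\delta\to0$ yields \eqref{eq Fourier Jump2}.

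\textbf{Main obstacle.} The delicate step is justifying the summation-by-parts bounds with only $F(r)=cr+o(r)$, a one-sided Cesàro-type information, and a merely continuous $\mathcalboondox{s}$. We will handle this with the Stieltjes formula
\[
\int_a^b \frac{dF}{r^\beta}=\frac{F(b)}{b^\beta}-\frac{F(a)}{a^\beta}+\beta\int_a^b \frac{F(r)}{r^{\beta+1}}\,dr,
\]
which only requires monotonicity of $F$. For the middle range we approximate $\mathcalboondox{s}(t)/t^2$ uniformly on $[\delta,M]$ by step functions, for which convergence follows directly from the pointwise convergence $F(Rt)/R\to ct$.
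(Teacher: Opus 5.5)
Your argument is correct and is essentially the paper's proof. It uses the same split into $|\omega|\le\delta R$, $\delta R\le|\omega|\le MR$ and $|\omega|\ge MR$, the same step-function (Riemann--Stieltjes) approximation on the middle range, and, for $\alpha=1$, the same observation that the logarithmic growth comes from $\int dF/r$ on the low range. The only difference is packaging: you derive the needed estimates directly from $F(r)=cr+o(r)$ by Stieltjes summation by parts, whereas the paper first records them as Proposition~\ref{formulas Jump1} (in particular the tail formula \eqref{jump2} and the logarithmic formula), obtained by the same integration by parts in Proposition~\ref{global}. The abandoned global integration-by-parts attempt at the start of your write-up can simply be deleted.
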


Theorem~\ref{Jump con Fourier} implies the following difference-quotient formulas, which can be considered variations of Dávila's formula \eqref{eq Davila}.

\begin{teo}\label{Jump con diferencias}
Let $\rho:\mathbb{R}^d \to \mathbb{R}$ be an integrable radial function, and let $g:[0,\infty)\to\mathbb{C}$ be defined by
\[
\widehat{\rho}(\omega)=g(|\omega|).
\]
Assume that for some $\alpha\ge1$,
\begin{equation}\label{cerca del cero}
    g(t)=g(0)-t^\alpha + o(t^\alpha)\qquad \text{as } t\to0^+.
\end{equation}
As usual, for $L>0$ let $\rho_L$ be the standard dilation of $\rho$, that is  
\[
\rho_L(x)=L^d\,\rho(Lx).
\]
Let $u\in BV(\mathbb{R}^d)\cap L^\infty(\mathbb{R}^d)$. Then the following asymptotics hold:

\begin{enumerate}
\item[i)] If $\alpha>1$, then
\[
\lim_{L\to\infty} L\,\iint_{\mathbb{R}^d\times\mathbb{R}^d}
(u(x)-u(y))^2\,\rho_L(x-y)\,dx\,dy
= \frac{1}{\pi^2}
C_\rho\mathcal{J}(u),
\]
where $C_\rho$ is the finite\footnote{The convergence of the integral \eqref{integral finita de s} is guaranteed by \eqref{cerca del cero} near the origin, and at infinity by the fact that $\rho \in L^1$, hence $g$ is bounded.} constant given by
\begin{equation}\label{integral finita de s}
C_\rho:= \int_0^\infty \frac{g(0)-g(t)}{t^2} \, dt.
\end{equation}
\item[ii)] If $\alpha=1$, then
\[
\lim_{L\to\infty}\frac{L}{\log L}\,\iint_{\mathbb{R}^d\times\mathbb{R}^d}
(u(x)-u(y))^2\,\rho_L(x-y)\,dx\,dy
= \frac{\mathcal{J}(u)}{\pi^2}.
\]
\end{enumerate}
\end{teo}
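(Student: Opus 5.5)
The plan is to convert the double integral into a Fourier-side integral via Plancherel and then apply Theorem~\ref{Jump con Fourier} with a suitable choice of $\mathcalboondox{s}$. First, since $u\in L^1\cap L^\infty\subset L^2$, we expand
\[
\iint (u(x)-u(y))^2\rho_L(x-y)\,dx\,dy = 2\Bigl(\int\rho_L\Bigr)\|u\|_2^2 - 2\int u(x)\,(u*\rho_L)(x)\,dx,
\]
which is legitimate because $\rho_L\in L^1$ and $u\in L^2$, so all terms are absolutely integrable. Since $\rho$ is radial and real, $\widehat{\rho}$ is real, and $\widehat{\rho_L}(\omega)=g(|\omega|/L)$, while $\int\rho_L=g(0)$. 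By Plancherel we then get
\[
\iint (u(x)-u(y))^2\rho_L(x-y)\,dx\,dy = 2\int_{\R^d}|\widehat u(\omega)|^2\bigl(g(0)-g(|\omega|/L)\bigr)\,d\omega .
\]

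Second, we set $\mathcalboondox{s}(t)=g(0)-g(t)$. We check the hypotheses of Theorem~\ref{Jump con Fourier}. Continuity and boundedness follow from the Riemann--Lebesgue lemma, which makes $\widehat\rho$ continuous and bounded on $\R^d$, and hence $g$ is continuous and bounded on $[0,\infty)$. Real-valuedness holds because $g$ is real. The condition $\mathcalboondox{s}(t)=t^\alpha+o(t^\alpha)$ is exactly \eqref{cerca del cero}. With $R=L$, the theorem in case $\alpha>1$ gives
\[
L\iint(\cdots) = 2L\int|\widehat u|^2\mathcalboondox{s}(|\omega|/L)\,d\omega \longrightarrow 2\,C_\rho\,\frac{\mathcal J(u)}{2\pi^2}=\frac{C_\rho}{\pi^2}\mathcal J(u),
\]
since $\int_0^\infty \mathcalboondox{s}(t)t^{-2}\,dt=C_\rho$. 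In case $\alpha=1$, the same substitution yields the limit $2\cdot\frac{\mathcal J(u)}{2\pi^2}=\frac{\mathcal J(u)}{\pi^2}$.

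The whole proof thus reduces to Theorem~\ref{Jump con Fourier}. The only delicate point is the Plancherel identity: we must confirm that $\widehat{u*\rho_L}=\widehat u\,\widehat{\rho_L}$ holds for $u\in L^2$ and $\rho_L\in L^1$, which is standard (Young's inequality together with density of Schwartz functions). We must also confirm that $g$ is real so that no imaginary part survives. This follows from radiality, since then $\rho(-x)=\rho(x)$; and $\rho$ itself is real-valued by assumption. If $g$ were allowed complex values, the real part of the integrand is what survives, and the argument is unchanged.
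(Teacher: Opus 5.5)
Your proposal is correct and matches the paper's proof: the paper applies Lemma~\ref{divided} with $f=\rho_L$ (your expansion-plus-Plancherel computation, which uses that $\rho$ is even) to get $2\int_{\R^d}|\widehat u(\omega)|^2\,\mathcalboondox{s}(|\omega|/L)\,d\omega$ with $\mathcalboondox{s}(t)=g(0)-g(t)$, and then invokes Theorem~\ref{Jump con Fourier}. Your explicit checks that $\mathcalboondox{s}$ is real, bounded and continuous are details the paper leaves implicit.
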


An immediate Corollary is the following.

\begin{cor}
    Let $\rho$ as in the statement of Theorem~\ref{Jump con diferencias} and $\eta(x)=|x|^2\rho(x)$. Let $u\in BV(\mathbb{R}^d)\cap L^\infty(\mathbb{R}^d)$. Then the following asymptotics hold:
    \begin{enumerate}
\item[i)] If $\alpha>1$, then
\begin{equation}\label{eq DD Jump1}
\lim_{L\to\infty} \iint_{\R^d\times\R^d}
\frac{\bigl|u(x)-u(y)\bigr|^2}{|x-y|^2}\,L^{d-1}\eta(L(x-y))\,dx\,dy=C_\rho\frac{\mathcal{J}(u)}{\pi^2}.
\end{equation}
\item[ii)] If  $\alpha=1$ then 
\begin{equation}\label{eq DD Jump2}
\lim_{L\to\infty}\frac{1}{\log L} \iint_{\R^d\times\R^d}
\frac{\bigl|u(x)-u(y)\bigr|^2}{|x-y|^2}\,L^{d-1}\eta(L(x-y))\,dx\,dy=\frac{\mathcal{J}(u)}{\pi^2} . 
\end{equation}
\end{enumerate}
\end{cor}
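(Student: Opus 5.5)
The corollary should follow from Theorem~\ref{Jump con diferencias} by a change of variables. No new analysis is needed, since the weight $|x-y|^{-2}$ is absorbed exactly by the $|x|^2$ factor in $\eta$.

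For $x\neq y$, set $z=x-y$ and expand the kernel:
\[
\frac{L^{d-1}\eta(Lz)}{|z|^2}=\frac{L^{d-1}\,L^2|z|^2\rho(Lz)}{|z|^2}=L^{d+1}\rho(Lz)=L\,\rho_L(z).
\]
The diagonal $\{x=y\}$ has measure zero in $\R^d\times\R^d$, so dividing by $|x-y|^2$ is harmless. This gives
\[
\iint_{\R^d\times\R^d}\frac{|u(x)-u(y)|^2}{|x-y|^2}\,L^{d-1}\eta(L(x-y))\,dx\,dy
= L\iint_{\R^d\times\R^d}(u(x)-u(y))^2\,\rho_L(x-y)\,dx\,dy .
\]
Each integral is absolutely convergent for fixed $L$, because $u\in L^2$ (as $u\in L^1\cap L^\infty$) and $\rho\in L^1$ give
\[
\iint (u(x)-u(y))^2|\rho_L(x-y)|\,dx\,dy\le 4\|u\|_2^2\|\rho\|_1 .
\]
So the identity holds literally, with no principal-value issues, even though $\rho$ may change sign.

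Case i) now follows from Theorem~\ref{Jump con diferencias} i) applied to the right-hand side. Dividing the identity by $\log L$ and applying Theorem~\ref{Jump con diferencias} ii) gives case ii).

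The only point needing care is that the hypotheses on $\rho$, namely integrability, radiality and the expansion \eqref{cerca del cero} of $\widehat\rho$, are imposed on $\rho$ itself, not on $\eta$. Since the statement takes $\rho$ as in Theorem~\ref{Jump con diferencias}, nothing further needs checking, and no integrability of $\eta$ is required.
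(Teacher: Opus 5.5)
Your proposal is correct and is exactly the argument the paper has in mind: the paper calls this an immediate corollary of Theorem~\ref{Jump con diferencias}, and the identity $L^{d-1}\eta(Lz)/|z|^2=L\,\rho_L(z)$ turns both limits directly into cases i) and ii) of that theorem. Your check that the double integral converges absolutely for fixed $L$ makes the substitution rigorous even when $\rho$ changes sign.
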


This result is related to a question posed in \cite[Pg. 243]{PonceSpector}, where a similar result is obtained for functions belonging to $\mathrm{SBV}(\R^d)$ (these are bounded variation functions whose derivative has no Cantor part) and the authors ask whether or not their result can be extended to $\mathrm{BV}(\R^d)$.  

\subsection{Hyperuniform point processes}

Variants of formula \eqref{eq Davila} have recently been used in the study of the asymptotic behavior of linear statistics of hyperuniform point processes \cite{Lin24, LMOC,MMOC}. In this work we will further explore this interaction to prove results for functions of bounded variation, which follow from the theory of point processes, and obtain precise asymptotics for linear statistics from variants of formula \eqref{eq Davila}.

To introduce our results, we need to define some concepts from the theory of point processes. For further details and examples, see Section~\ref{Appl to PP} and the references cited therein.

A point process can be seen as a random locally finite point set. We are going to consider only simple point processes (no coincident points) that are invariant under rigid motions (stationary and isotropic). 

Of particular interest in the study of random point processes is the behavior of the so-called linear statistics, i.e. random variables
\[
\mathcal{X}(f)=\int_{\R^d} f \, d\mathcal{\mathcal{X}}=\sum_i f(x_i),
\]
where $x_i\in \R^d$ are the random points generated by the process. In particular, the number variance $\Var(\mathcal{X}(\chi_A))$, i.e., the fluctuation of the random variable counting the number of points of the process falling in the set $A\subset \R^d$, is of special importance.

As a consequence of Bochner’s theorem, see \cite[Theorem 1.5]{coste},
the variance of the linear statistic $\mathcal{X}(u)$ for $u$ smooth and compactly supported (and hence, by density, for any $u\in L^1(\R^d)\cap L^2(\R^d)$) is
\[
\Var(\mathcal{X}(u))=\int_{\R^d} |\widehat{u}(\omega)|^2 d\mathcal{S}(\omega).
\]
Here $\mathcal{S}$ is a positive measure called spectral or structure measure. For brevity, when $A$ is a set, we will sometimes denote $\mathcal{X}(A) := \mathcal{X}(\chi_A)$.

In most of the cases of interest in the literature, this measure is absolutely continuous with respect to the Lebesgue measure, and the function $\mathcalboondox{s}$ is non-negative, bounded and uniformly continuous 
(see Section~\ref{Appl to PP} for more details).

Our interest will focus on the class of hyperuniform point processes, that has recently received considerable attention, see the survey paper \cite{LacSurvey}.
A point process in $\R^d$ is called \emph{hyperuniform} when the fluctuation of the counting random variable on balls (the spherical number variance) is suppressed at large scales i.e.   
\[
\Var(\mathcal{X}(B(0,R)))=o(|B(0,R)|),\;R\to +\infty.
\]
Under very general conditions this is equivalent to $\mathcalboondox{s}(0)=0$.

Hyperuniform point processes whose structure function satisfies
\begin{equation}\label{local behavior structure function}
    \mathcalboondox{s}(\omega)=c|\omega|^{\alpha} + o(|\omega|^{\alpha}), \qquad \text{for } \omega\to  0,
\end{equation}
for some $\alpha, c>0$ are particularly relevant in condensed
matter physics and materials science \cite{Tor16}. To simplify notation we assume from now on that $c=1$ in (\ref{local behavior structure function}). In the seminal work \cite{Tor16} (see also \cite{torquato-stillinger, To18}), by using precise formulas for the variance and studying the asymptotic behavior of Bessel functions, Torquato proved that, depending on whether $\alpha>1$, $\alpha=1$ or $0<\alpha<1$ in \eqref{local behavior structure function}, the large scale asymptotic scaling of the number variance falls into three different regimes
\begin{description}
\item[Type I point processes]  In this case $\mathcalboondox{s}(\omega)=|\omega|^{\alpha} + o(|\omega|^{\alpha})$ for $\omega\to  0$, for some $\alpha>1,$
and then $\Var(\mathcal{X}(B(0,R)))=O(R^{d-1}),\,R\to +\infty.$
\medskip
\item[Type II point processes]  In this case $\mathcalboondox{s}(\omega)=|\omega|+ o(|\omega|)$ for $\omega\to  0,$ and then $\Var(\mathcal{X}(B(0,R)))=O(R^{d-1}\log R),\,R\to +\infty.$ 
\medskip
\item[Type III point processes] In this case $\mathcalboondox{s}(\omega)=|\omega|^{\alpha} + o(|\omega|^{\alpha})$ for $\omega\to  0$, for some $0<\alpha<1,$ and then $\Var(\mathcal{X}(B(0,R)))=O(R^{d-\alpha}),\,R\to +\infty.$
\end{description}

Our next result goes one step forward, extending Torquato's classification of the three asymptotic regimes above described to the variance of the linear statistic associated with a general bounded function of bounded variation.

\begin{teo}\label{asymptotic behavior}
Let $\mathcal{X}$ be a stationary hyperuniform and isotropic point process on $\R^d$, with structure function $\mathcalboondox{s}$ satisfying \eqref{local behavior structure function} and first intensity equal to one. Let $\mathcal{X}_L$ denote the point process dilated by $L$ so that the first intensity of $\mathcal{X}_L$ is equal to $L^d$ (i.e $\mathcal{X}_L$ is the process with points $x_i/L$ if $x_i$ are the points of $\mathcal{X}$). Then, for any $u\in \mathrm{BV}(\R^d)\cap L^\infty(\R^d)$,
\begin{enumerate}
    \item[i.)] If $\mathcal{X}$ is of type I ($\alpha>1$), then
    \begin{equation}\label{eq var Jump1}
    \lim_{L\to\infty} \frac{\Var(\mathcal{X}_L(u))}{L^{d-1}}=  \left( \int_0^\infty \frac{\mathcalboondox{s}(r)}{r^2} \,dr\right) \frac{\mathcal{J}(u)}{2\pi^2}.
    \end{equation}
    \item[ii.)] If $\mathcal{X}$ is of type II ($\alpha=1$), then
    \begin{equation}\label{eq var Jump2}
    \lim_{L\to\infty}\,\frac{\Var(\mathcal{X}_L(u))}{L^{d-1}\log L} = \frac{\mathcal{J}(u)}{2\pi^2}.
    \end{equation}
\item[iii.)]  If $\mathcal{X}$ is of type III ($0<\alpha<1$), then
    \begin{equation}\label{eq var Jump3}
    \lim_{L\to\infty} \frac{\Var(\mathcal{X}_L(u))}{L^{d-\alpha}} = \int_{\R^d} |\widehat{u}(\omega)|^2 |\omega|^\alpha \,d\omega.
    \end{equation}
\end{enumerate}    
\end{teo}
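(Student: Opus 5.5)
The plan is to reduce everything to Theorem~\ref{Jump con Fourier} by rescaling the spectral representation of the variance. The dilated process $\mathcal{X}_L$ has points $x_i/L$, so $\mathcal{X}_L(u)=\mathcal{X}(u(\cdot/L))$. Writing $u_L(x)=u(x/L)$, we have $\widehat{u_L}(\omega)=L^d\widehat u(L\omega)$. Hence
\[
\Var(\mathcal{X}_L(u))=\int_{\R^d} L^{2d}|\widehat u(L\omega)|^2\,\mathcalboondox{s}(\omega)\,d\omega = L^{d}\int_{\R^d}|\widehat u(\eta)|^2\,\mathcalboondox{s}(\eta/L)\,d\eta .
\]
Since $u\in \mathrm{BV}\cap L^\infty\subset L^1\cap L^2$, the Bochner formula applies. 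By isotropy $\mathcalboondox{s}$ is radial, and I will write $\mathcalboondox{s}(\eta)=\mathcalboondox{s}(|\eta|)$ with $\mathcalboondox{s}:[0,\infty)\to\R$ bounded and continuous. Hypothesis \eqref{local behavior structure function} with $c=1$ says exactly that $\mathcalboondox{s}(t)=t^\alpha+o(t^\alpha)$.

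For type I, put $R=L$. Then $\Var(\mathcal{X}_L(u))/L^{d-1}=R\int|\widehat u|^2\mathcalboondox{s}(|\eta|/R)\,d\eta$, and part i.) of Theorem~\ref{Jump con Fourier} gives \eqref{eq var Jump1} directly. For type II the same identity gives $\Var/(L^{d-1}\log L)=\frac{R}{\log R}\int|\widehat u|^2\mathcalboondox{s}(|\eta|/R)\,d\eta$, and part ii.) yields \eqref{eq var Jump2}. The only thing to check is that the structure function satisfies the hypotheses of Theorem~\ref{Jump con Fourier}: it must be bounded and continuous on $[0,\infty)$. I will assume this as part of the standing assumptions on $\mathcalboondox{s}$ described before the theorem (bounded, uniformly continuous).

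For type III, I write $\Var/L^{d-\alpha}=\int|\widehat u(\eta)|^2\,L^{\alpha}\mathcalboondox{s}(\eta/L)\,d\eta$. Pointwise, $L^\alpha\mathcalboondox{s}(\eta/L)\to|\eta|^\alpha$. To use dominated convergence I need a bound $L^\alpha\mathcalboondox{s}(\eta/L)\le C|\eta|^\alpha$ for all $\eta$ and $L$.
\begin{itemize}
\item Near $0$ the bound comes from the asymptotics: $\mathcalboondox{s}(t)\le 2t^\alpha$ for $t\le\delta$.
\item For $t\ge\delta$, boundedness gives $\mathcalboondox{s}(t)\le \|\mathcalboondox{s}\|_\infty\le \|\mathcalboondox{s}\|_\infty\delta^{-\alpha}t^\alpha$.
\end{itemize}
So it remains to show $\int|\widehat u(\eta)|^2|\eta|^\alpha\,d\eta<\infty$ for $0<\alpha<1$. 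This is the main obstacle. I would split the integral at $|\eta|=1$. The low-frequency part is bounded by $\|u\|_2^2$. For the high-frequency part I use dyadic shells: apply Theorem~\ref{Jump con Fourier} i.) (or even the simpler bound behind Theorem~\ref{teo Beretti}) with some fixed $\alpha'>1$ profile, say $\mathcalboondox{s}(t)=\min(t^2,1)$. This gives $\int_{|\eta|\ge R}|\widehat u|^2\,d\eta\le CR^{-1}$ for large $R$. Summing over the shells $2^k\le|\eta|<2^{k+1}$ bounds the high-frequency part by $\sum_k 2^{k\alpha}\,C2^{-k}<\infty$, since $\alpha<1$. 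Alternatively, I could use the standard fact $\mathrm{BV}\cap L^\infty\subset H^{s}$ for $s<1/2$, proved via $\|u(\cdot+h)-u\|_2^2\le 2\|u\|_\infty |h|\,\|u\|_{\mathrm{BV}}$. Dominated convergence then gives \eqref{eq var Jump3}.
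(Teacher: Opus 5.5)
Your proof is correct and follows the paper's route. Both use the same rescaling $\Var(\mathcal{X}_L(u))=L^d\int_{\R^d}|\widehat u(\omega)|^2\,\mathcalboondox{s}(|\omega|/L)\,d\omega$, then Theorem~\ref{Jump con Fourier} for types I and II. For type III the paper instead cites Proposition~\ref{Jump con Fourier alpha<1}, whose $\varepsilon$-splitting proof your dominated-convergence argument with majorant $C|\widehat u(\eta)|^2|\eta|^\alpha$ packages more cleanly (and without needing the tail estimate \eqref{jump2}); your second argument for the finiteness of $\int|\widehat u|^2|\eta|^\alpha$ is exactly Lemma~\ref{lemma_inclusion}.
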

We point out that the integral in \eqref{eq var Jump1} is always finite by \eqref{local behavior structure function} and the fact that the structure function is bounded, and the integral in (\ref{eq var Jump3}) is always finite, because
$BV\cap L^{\infty}\subset H^{\alpha/2}$ for $0\le \alpha<1,$ see Lemma~\ref{lemma_inclusion}.
\begin{rem}
By applying the above theorem to $u=\chi_\Omega$, where $\Omega$ is a set of finite measure and finite perimeter, we get
\begin{equation*}
    \lim_{L\to\infty}\frac{\Var(\mathcal{X}(L\Omega))}{\Per(L\Omega)}=\frac{1}{2\pi^2}\left( \int_0^\infty \frac{\mathcalboondox{s}(r)}{r^2} \,dr\right) 
\end{equation*}
which shows that Torquato's result naturally extends from the number variance over spherical windows to the number variance associated to any such window $\Omega$.

\end{rem}

\begin{rem}
In the proof of this result one can see that if the function $u$ is rotationally invariant, as in the case of $u=\chi_{B(0,1)}$, which is the function one can use to recover the result of Torquato, it is also possible to consider non-isotropic point processes.
\end{rem}

\begin{rem}
If $\Omega\subset \R^d$ is of finite measure and
$\chi_\Omega\notin BV$, then the limsup of the left-hand side of \eqref{eq var Jump1} is infinity, but one can still get finite limits 
in (\ref{eq var Jump3}) with the parameter $\alpha$ chosen appropriately in relation  with the Minkowski dimension of the boundary.
For example, if $\Omega$ is the interior of the Koch snowflake in $\R^2$ the limit in (\ref{eq var Jump3}) is infinite except when
$0<\alpha<2-\log(4)/\log(3),$ in which case we obtain the corresponding Sobolev seminorm \cite[Corollary 1.4]{FR13}. See also \cite{Lin24} for a more in-depth treatment and  \cite{SodinWennmanYakir2023} for a similar result.

\end{rem}

For hyperuniform point processes in the Euclidean space our result above can be applied to generalize previous results about the behavior of linear statistics. For example, the following result for zeros of the Gaussian entire function (GEF) was first proved in \cite{FH99} for characteristic functions of sets of smooth boundary and finally in \cite{NS11} in a very general setting.

\begin{cor}\label{corolari_GEF}
    Let $\mathcal{X}$ be the point process given by the zeros in $\mathbb C$ of the GEF $$f(z)=\sum_{n=0}^\infty  \frac{a_n}{\sqrt{n!}} z^n,$$
    where $a_n$ are i.i.d complex normal random variables. Then, for every $u\in \mathrm{BV}(\R^2)\cap L^\infty(\R^2)$
    \[
    \lim_{L\to\infty}\frac{\Var (\mathcal{X}_L(u))}{L}=\frac{\zeta(3/2)}{8\pi^{3/2}} \mathcal{J}(u).
    \]
\end{cor}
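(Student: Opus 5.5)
The corollary is a direct application of Theorem~\ref{asymptotic behavior}, case i.) (type I, $d=2$). The plan has three steps: identify the structure function of the GEF zero process, check the hypotheses, and evaluate the constant $\int_0^\infty \mathcalboondox{s}(r)r^{-2}\,dr$.

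\emph{Structure function.} The zero set of the GEF is stationary and isotropic, since its law is invariant under the plane isometries. Its first intensity is $1/\pi$, so we rescale $\mathcal{X}$ to intensity one, and at the end we check that the normalization matches the stated limit.

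The two-point function of GEF zeros is explicit (Hannay, Forrester--Honner). For the zeros of $f$ with intensity $1/\pi$, the truncated pair correlation $h(r)=\rho_2/\rho_1^2-1$ is an explicit function of $t=r^2$. One known route is via the edge formula
\[
\Var\Big(\sum \varphi(z_i)\Big)=\frac{1}{16\pi}\int |\widehat{\Delta\varphi}|^2\ldots,
\]
that is, the Sodin--Tsirelson formula. In Fourier variables it gives
\[
\mathcalboondox{s}(\omega)\propto|\omega|^4\sum_{k\ge1}k^{-3}\,e^{-c|\omega|^2/k}.
\]
More precisely, since $\Var(n_f(\varphi))=\frac{1}{16\pi^2}\sum_{k\ge1}k^{-3}\cdots$, I would write
\[
\Var(\mathcal{X}(\varphi))=\frac{1}{16\pi^2}\int \Delta\varphi(z)\,\Delta\varphi(w)\,\mathrm{Li}_2\bigl(e^{-|z-w|^2}\bigr)\,dm\,dm.
\]
Taking Fourier transforms, with $\widehat{\Delta\varphi}=-4\pi^2|\omega|^2\widehat\varphi$ and $\mathrm{Li}_2(e^{-|x|^2})=\sum_k k^{-2}e^{-k|x|^2}$, this yields
\[
\mathcalboondox{s}(\omega)=\pi^2|\omega|^4\sum_{k\ge1}k^{-3}e^{-\pi^2|\omega|^2/k}.
\]
The pure-Fourier constants here are only up to bookkeeping, and the rescaling to unit intensity has to be accounted for as well.

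\emph{Hypotheses.} The function $\mathcalboondox{s}$ is bounded and continuous. It behaves like $c|\omega|^4$ at the origin, so $\alpha=4>1$ and the process is type I. The normalization $c=1$ in \eqref{local behavior structure function} is not actually needed in case i.): formula \eqref{eq var Jump1} involves only $\int \mathcalboondox{s}/r^2$, and its proof through Theorem~\ref{Jump con Fourier} works for any $c$. Indeed, formula \eqref{eq Fourier Jump1} is linear in $\mathcalboondox{s}$, so the assumption $\mathcalboondox{s}(t)=t^\alpha+o(t^\alpha)$ only serves to guarantee integrability.

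\emph{Constant.} We then compute
\[
\int_0^\infty r^2\sum_k k^{-3}e^{-a r^2/k}\,dr=\sum_k k^{-3}\,\frac{\sqrt\pi}{4}\,\Bigl(\frac{k}{a}\Bigr)^{3/2}=\frac{\sqrt\pi}{4a^{3/2}}\,\zeta(3/2).
\]
This is where $\zeta(3/2)$ and $\pi^{3/2}$ come from. Multiplying by $\frac{1}{2\pi^2}$ and the prefactor, with the correct intensity normalization, should give $\frac{\zeta(3/2)}{8\pi^{3/2}}$.

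The main obstacle is the exact constant: getting the GEF variance formula normalized precisely and rescaling the process to intensity one consistently with $\mathcal{X}_L$. Under that rescaling both $\mathcalboondox{s}$ and the factor $L^{d-1}$ change. I would first do the computation on a disc, where the classical $R$-asymptotics of Forrester--Honner are known, and use it to calibrate the constant.
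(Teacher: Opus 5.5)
Your skeleton matches the paper's. You apply case i.) of Theorem~\ref{asymptotic behavior} with $\alpha=4$ to the explicit GEF spectral density, then evaluate $\int_0^\infty \mathcalboondox{s}(t)t^{-2}\,dt$ by Tonelli and $\int_0^\infty t^2e^{-at^2}\,dt=\sqrt{\pi}/(4a^{3/2})$. Your remark that $c=1$ is irrelevant by linearity is correct; the paper uses this tacitly, since here $c=\pi^3\zeta(3)$. But the only non-formal content of the corollary is the exact constant, and your proposal does not establish it.

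\textbf{The prefactor.} Your Fourier computation drops a factor $\pi$. In $\R^2$, $\widehat{e^{-k|\cdot|^2}}(\omega)=\frac{\pi}{k}e^{-\pi^2|\omega|^2/k}$, so the $\mathrm{Li}_2$ formula gives
\[
\frac{1}{16\pi^2}\,(4\pi^2|\omega|^2)^2\sum_{k\ge1}\frac{1}{k^2}\cdot\frac{\pi}{k}\,e^{-\pi^2|\omega|^2/k}=\pi^3|\omega|^4\sum_{k\ge1}k^{-3}e^{-\pi^2|\omega|^2/k}.
\]
This is exactly the Nazarov--Sodin density the paper quotes. A sanity check catches the error: a spectral density must tend to the intensity $1/\pi$ at infinity. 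Indeed $\pi^3t^4\sum_k k^{-3}e^{-\pi^2t^2/k}\approx\pi^3t^4(\pi^2t^2)^{-2}=1/\pi$, while your $\pi^2$ version tends to $1/\pi^2$. With your prefactor the final constant comes out as $\zeta(3/2)/(8\pi^{5/2})$.

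\textbf{The normalization.} Your planned rescaling is not the one the statement uses. The constant $\zeta(3/2)/(8\pi^{3/2})$ corresponds to $\mathcal{X}_L$ having points $x_i/L$, with $x_i$ the zeros themselves (intensity $1/\pi$). Then $\Var(\mathcal{X}_L(u))=L^2\int|\widehat u(\omega)|^2\mathcalboondox{s}(|\omega|/L)\,d\omega$, where $\mathcalboondox{s}$ is the density above and already contains $\rho_1$; this is \eqref{eq:var-L-M}.

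If you first renormalize to unit intensity (points $x_i/\sqrt{\pi}$), the density becomes $\pi^2t^4\sum_k k^{-3}e^{-\pi t^2/k}$. The limit then becomes $\frac{\zeta(3/2)}{8\pi}\mathcal{J}(u)$, which is not the claim.

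With the correct density and no renormalization,
\[
\int_0^\infty \mathcalboondox{s}(t)t^{-2}\,dt=\pi^3\sum_k k^{-3}\tfrac{\sqrt{\pi}}{4}(k/\pi^2)^{3/2}=\tfrac{\sqrt{\pi}}{4}\zeta(3/2),
\]
and multiplying by $1/(2\pi^2)$ gives exactly $\zeta(3/2)/(8\pi^{3/2})$. No calibration against disc asymptotics is needed.

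\textbf{Unproved assertions.} You also leave two smaller points as assertions:
\begin{itemize}
\item Boundedness of $\mathcalboondox{s}$. The paper splits the sum at $n\approx t^2$ and uses $e^{-x}\lesssim x^{-3}$ for $n\le t^2$.
\item The variance identity for $u\in\mathrm{BV}\cap L^\infty$. The $\Delta\varphi$ formula holds only for smooth $\varphi$, so you need a density argument. The paper instead quotes the identity for all $u\in L^1\cap L^2$.
\end{itemize}
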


Further examples are discussed in \cite{LMOC}. 

\subsection{BMO-type seminorms and the jump}

Formulas for the perimeter and, more generally, for the norm of bounded variation functions were developed in \cite{ABBF} with the original motivation of
characterizing the space of functions of bounded variation without the use of distributional derivatives. To further study the different parts of the norm 
of a function of bounded variation associated to the decomposition of $Du$, we consider a variation of the isotropic BMO-type semi-norms introduced by Ambrosio, Bourgain, Brezis, and Figalli in \cite{ABBF}
\[
\kappa_\varepsilon(u)=\varepsilon^{d-1} \sup _{\mathcal{Q}_\varepsilon } \sum_{Q^{\prime} \in \mathcal{Q}_\varepsilon} \fint_{Q^{\prime}}\left|u(x)-\fint_{Q^{\prime}} u(y)\, dy\right|\, dx,
\]
where the supremum is taken over all families $\mathcal{Q}_\varepsilon$ of disjoint cubes with side length $\varepsilon$, with arbitrary orientation and cardinality. If $u=\chi_A$, where $A \subset \mathbb{R}^d$ is a Caccioppoli set, they show that one has the convergence \cite[Eq. (4.4)]{ABBF}
\[
\lim _{\varepsilon \rightarrow 0} \kappa_\varepsilon\left(\chi_A\right)=\frac{1}{2}\left|D \chi_A\right|(\mathbb{R}^d).
\]

This result has been extended by Fusco, Moscariello and Sbordone (see \cite{FMS16}) to $u \in S B V\left(\mathbb{R}^d\right)$. In  \cite{FMS16} they prove that for such $u$ one has
\[
\lim _{\varepsilon \rightarrow 0} \kappa_\varepsilon(u)=\frac{1}{4} \int_{\mathbb{R}^d}|\nabla u|+\frac{1}{2}\left|D^s u\right|\left(\mathbb{R}^d\right).
\]

The original version was proved under an extra technical assumption on the function $u$, which was subsequently removed in \cite{dPFP17}. For the $L^2$ version see \cite{FMS18}.

Let $Q_0$ denote the unit cube $[-\frac12, \frac12)^d$. Given $t\in Q_0$ and $U\in O(d)$, let 
\[
\mathcal{Q}_\varepsilon (t,U)= \left\{\varepsilon U(Q_0+t+n): n\in\mathbb Z^d\right\}
\]
Given $u\in \mathrm{BV}(\R^d)\cap L^\infty(\R^d)$ we define the  BMO-type functional 
\[
\tau_\varepsilon(u)=\varepsilon^{d-1} \left\langle \sum_{Q'\in \mathcal{Q}_\varepsilon (t,U)} \fint_{Q'}\left|u(x)-\fint_{Q'} u(y) \,dy\right|^2 \,dx \right\rangle
\]
where for $f:Q_0\times O(d)\rightarrow \R$ integrable
\[
\langle f(t,U) \rangle := \int_{Q_0}\int_{\mathrm{O}(d)} f(t,U)\,dU\,dt,
\]
with $dU$ the Haar measure in $\mathrm{O}(d)$. For this functional we prove, with essential use of point process theory, that
\begin{teo}\label{BMO2 integral}
If $u\in \mathrm{BV}(\R^d)\cap L^\infty(\R^d)$, then 
\begin{equation}\label{eq Jump BMO}
\lim_{\varepsilon\to 0^+} \tau_\varepsilon(u)= C\, \mathcal{J}(u),
\end{equation} 
for
\begin{align*}
C=\frac{\Gamma(d/2)}{2\sqrt{\pi}\,\Gamma\!\left(\frac{d+1}{2}\right)}
      \int_{\mathbb{R}^d} |x| \prod_{j=1}^d (1 - |x_j|)_+ \, dx.
\end{align*}
In particular, if $\Omega\subset \R^d$ is a set with finite measure and finite perimeter, then 
\begin{equation}\label{eq perimetro BMO}
\lim_{\varepsilon\to 0^+} \tau_\varepsilon(\chi_\Omega)= C\, \Per(\Omega). 
\end{equation} 
\end{teo}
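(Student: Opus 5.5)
The plan is to rewrite $\tau_\varepsilon(u)$ as a difference-quotient functional of the type in Theorem~\ref{Jump con diferencias}, with an explicit compactly supported radial kernel, and then read off the constant.

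\emph{Step 1: averaging identity.} For any cube $Q'$,
\[
\fint_{Q'}\Bigl|u-\fint_{Q'}u\Bigr|^2=\frac{1}{2|Q'|^2}\iint_{Q'\times Q'}(u(x)-u(y))^2\,dx\,dy .
\]
Summing over $Q'\in\mathcal{Q}_\varepsilon(t,U)$ and using $|Q'|=\varepsilon^d$ gives
\[
\tau_\varepsilon(u)=\frac{\varepsilon^{d-1}}{2\varepsilon^{2d}}\iint (u(x)-u(y))^2\,\bigl\langle \mathbf 1\{x,y \text{ lie in the same cube of } \mathcal{Q}_\varepsilon(t,U)\}\bigr\rangle\,dx\,dy .
\]
Fubini is justified since $u\in L^\infty$ and all terms are nonnegative. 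This is exactly the point-process viewpoint: a randomly shifted and rotated lattice is a stationary isotropic process. For fixed $U$, the probability over $t\in Q_0$ that $x,y$ fall in the same cube is $\prod_j(1-|(U^{-1}(x-y))_j|/\varepsilon)_+$. Set
\[
\varphi(z)=\int_{O(d)}\prod_{j}\bigl(1-|(Uz)_j|\bigr)_+\,dU .
\]
This $\varphi$ is radial, nonnegative, compactly supported and bounded, with $\int\varphi=1$. With $L=1/\varepsilon$ we obtain
\[
\tau_\varepsilon(u)=\tfrac12\,L\iint (u(x)-u(y))^2\,\varphi_L(x-y)\,dx\,dy .
\]

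\emph{Step 2: apply Theorem~\ref{Jump con diferencias}(i).} Write $\widehat\varphi(\omega)=g(|\omega|)$. Since $\varphi$ has compact support and finite second moment, $g$ is smooth, and for a unit vector $e$
\[
g(0)-g(t)=\int\varphi(x)\bigl(1-\cos(2\pi t\,x\cdot e)\bigr)\,dx=c\,t^2+o(t^2),
\qquad c=2\pi^2\int\varphi(x)(x\cdot e)^2\,dx>0 .
\]
So the hypothesis \eqref{cerca del cero} holds with $\alpha=2$ after replacing $\varphi$ by $\varphi/c$. By linearity of both sides in $\rho$, the conclusion of part (i) holds for $\varphi$ itself, with $C_\varphi=\int_0^\infty (g(0)-g(t))t^{-2}\,dt$. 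Hence
\[
\lim_{\varepsilon\to0}\tau_\varepsilon(u)=\frac{C_\varphi}{2\pi^2}\,\mathcal J(u).
\]

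\emph{Step 3: compute the constant.} By Tonelli and $\int_0^\infty(1-\cos(at))t^{-2}\,dt=\pi|a|/2$ with $a=2\pi\,x\cdot e$,
\[
C_\varphi=\pi^2\int\varphi(x)\,|x\cdot e|\,dx=\pi^2K_d\int|x|\,\varphi(x)\,dx .
\]
The last equality averages over $e\in\mathbb S^{d-1}$, using radiality of $\varphi$ and the definition of $K_d$. Rotation invariance of $|x|$ then gives $\int|x|\varphi=\int|x|\prod_j(1-|x_j|)_+\,dx$. Therefore $\frac{C_\varphi}{2\pi^2}=\frac{K_d}{2}\int|x|\prod_j(1-|x_j|)_+\,dx$, which is the stated $C$. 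The perimeter statement follows with $u=\chi_\Omega$, since $\mathcal J(\chi_\Omega)=\Per(\Omega)$.

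The main obstacle is the rigorous verification in Step 1 of the coincidence probability and the accompanying measure-theoretic bookkeeping (Fubini over $t$, $U$ and the countable cube family). One must also check that the normalization of $g$ required by Theorem~\ref{Jump con diferencias} can be relaxed by scaling as claimed. Everything else is a direct computation.
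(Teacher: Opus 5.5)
Your proof is correct, and it reaches the same Fourier-side object as the paper by a more elementary route.

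\textbf{The paper's route.} The paper builds the stationarized perturbed lattice $\sum_n\delta_{n+X+X_n}$ with $X_n$ uniform on $Q_0$. It quotes the variance formula for perturbed lattices from \cite{Ya21}, using $\widehat\phi(m)=0$ for $m\neq0$ to kill the lattice terms. It then randomizes the rotation and uses the law of total variance twice to identify $\tau_{1/L}(u)$ with $L\int|\widehat u|^2\mathcalboondox{s}(\omega/L)\,d\omega$, where $\mathcalboondox{s}(\omega)=\int_{\mathrm{O}(d)}(1-|\widehat\phi(U\omega)|^2)\,dU$. Finally it applies Theorem~\ref{Jump con Fourier} directly.

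\textbf{Your route.} You use the identity $\fint_{Q'}|u-\fint_{Q'}u|^2=\frac{1}{2|Q'|^2}\iint_{Q'\times Q'}(u(x)-u(y))^2$ together with the coincidence probability $\prod_j(1-|a_j-b_j|)_+$ for a randomly shifted grid. This writes $\tau_\varepsilon$ exactly as a Dávila-type nonlocal functional $\tfrac12 L\iint(u(x)-u(y))^2\varphi_L(x-y)$ with an explicit kernel. You then invoke Theorem~\ref{Jump con diferencias}.

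\textbf{Where they meet.} By Lemma~\ref{divided}, $\widehat\varphi=1-\mathcalboondox{s}$, since $|\widehat\phi|^2=\widehat\psi$. Your coefficient $c=2\pi^2\int\varphi(x)(x\cdot e)^2\,dx=\pi^2/3$ is exactly the paper's. Your rescaling to meet the normalization $g(0)-g(t)=t^2+o(t^2)$ is legitimate by linearity; the paper does the same thing tacitly when it applies Theorem~\ref{Jump con Fourier} with leading coefficient $\pi^2/3$. Your constant computation, via Tonelli and $\int_0^\infty(1-\cos at)\,t^{-2}\,dt=\pi|a|/2$ followed by averaging over $e$ to produce $K_d$, is the one-dimensional version of the paper's spherical-integral computation.

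\textbf{What each buys.} Your argument is self-contained: it needs no external variance formula for perturbed lattices and no conditioning. It also shows that point-process theory, though a pleasant interpretation, is not actually essential here. The paper's argument buys the interpretation of $\tau_\varepsilon$ as the normalized variance of a type~I hyperuniform process, which is the thematic point of that section.

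The ``main obstacle'' you flag is not a real one. The coincidence probability is a single-coordinate computation that uses the independence of the coordinates of $t$. Fubini--Tonelli applies because every integrand in sight is nonnegative.
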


%On the other hand, combining results from \cite{FMS16}, and \cite{dPFP17}  (see Proposition (CITAR) below), we can prove that
%
%\[
%\tau^{\mathrm{sup}}_\varepsilon(u)=\varepsilon^{d-1} \sup _{\mathcal{Q}_\varepsilon }  \sum_{Q^{\prime} \in \mathcal{Q}_\varepsilon} \fint_{\varepsilon U(Q+n)}\left|u(x)-\fint_{\varepsilon U(Q+n)} u(y) dy\right|^2\, dx \leq \mathcal{J}(u).
%
%\]
%Therefore, from Theorem~\ref{BMO2 integral} we get the following corollary.
%
%
%
%\begin{teo}\label{BMO2 infinito}
%If $u\in \mathrm{BV}(\R^d)\cap L^\infty(\R^d)$, then
%\begin{equation}\label{eq perimetro con fourier 1}
%\lim_{\varepsilon\to 0^+} \tau^{\mathrm{sup}}_\varepsilon(u)= \mathcal{J}(u) 
%\end{equation} 
%\end{teo}

\section{From Corollary~\ref{char Davila} to Corollary~\ref{char Beretti}}\label{from davila}

This section is devoted to proving Claim~\ref{From Davila to BerGenn}. We remark that Corollary~\ref{char Beretti} may as well be obtained as a consequence of Theorem~\ref{Jump con diferencias} but we prefer to give here a direct and simple proof, ad hoc for characteristic functions, postponing the more technical proof of Theorem~\ref{Jump con diferencias} which applies to general $BV$ functions. What makes it possible to provide such a simplified proof is, essentially, the trivial observation that $|u(x)-u(y)|^2=|u(x)-u(y)|$ when $u$ is a characteristic function. This simple fact will allow to exploit the following lemma.

%We will prove that from the formula 
% \[
%  \lim_{L\to \infty} \iint_{\R^d\times\R^d}
% \frac{\bigl|\chi_{\Omega}(x)-\chi_{\Omega}(y)\bigr|}{|x-y|}\,\rho_L(x-y)\,dx\,dy=K_{d}\Per(\Omega),
% \]
% we can obtain 
% \[
% \lim_{R\to\infty} \frac {2\pi^2}{R} \int_{B(0,R)} |x|^2 |\widehat{\chi_\Omega}(x)|^2\, dx=\Per(\Omega).
% \]
%One of the main ingredients is the following lemma.
%which will be used also in subsequent generalizations.

\begin{lem}\label{divided}
Let $f\in L^1(\R^d)$ be an even function. If $u\in L^2(\R^d)$, then
\begin{equation}\label{eq divided}
\begin{split}
&\widehat{f}(0)\int_{\R^d} |\widehat{u}(\omega)|^2\, d\omega - \int_{\R^d} \widehat{f}(\omega)|\widehat{u}(\omega)|^2\, d\omega\\
&=\frac{1}{2}\iint_{\R^d\times\R^d}
\bigl|u(x)-u(y)\big|^2\,f(x-y)\,dx\,dy.
\end{split}
\end{equation}
\end{lem}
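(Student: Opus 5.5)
The plan is to expand the square on the right-hand side of \eqref{eq divided}, compute the two resulting terms separately, and then rewrite each one in frequency variables using Plancherel. The hypotheses $f\in L^1$ and $u\in L^2$ make everything absolutely convergent, which is what justifies using Fubini freely.

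First, the integrability. By Young's inequality and Cauchy--Schwarz,
\[
\iint |u(x)|^2 |f(x-y)|\,dx\,dy = \|f\|_1\|u\|_2^2 < \infty ,
\]
and since $|u(x)-u(y)|^2\le 2|u(x)|^2+2|u(y)|^2$, the full integrand on the right is absolutely integrable. Fubini is therefore available throughout.

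Expanding $|u(x)-u(y)|^2 = |u(x)|^2+|u(y)|^2-2\,\mathrm{Re}\,\bigl(u(x)\overline{u(y)}\bigr)$ gives three pieces.

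\emph{The two diagonal pieces.} Integrating $|u(x)|^2 f(x-y)$ first in $y$ gives $\widehat f(0)\|u\|_2^2$. The same holds for the $|u(y)|^2$ piece, since $f$ is even. Together they contribute $2\widehat f(0)\|u\|_2^2$, and by Plancherel this equals $2\widehat f(0)\int|\widehat u|^2$.

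\emph{The cross piece.} Here we need the identity
\[
\iint u(x)\overline{u(y)} f(x-y)\,dx\,dy = \langle f*u, u\rangle .
\]
Since $f*u\in L^2$ with $\widehat{f*u}=\widehat f\,\widehat u$ (convolution theorem for $L^1*L^2$), Plancherel turns this into $\int \widehat f\,|\widehat u|^2$. This last integral is real, because $f$ is even and real-valued, so $\widehat f$ is real. The real part is then the integral itself.

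Collecting terms, the right-hand side equals
\[
\tfrac12\Bigl(2\widehat f(0)\textstyle\int|\widehat u|^2 - 2\int\widehat f\,|\widehat u|^2\Bigr),
\]
which is exactly the left-hand side of \eqref{eq divided}.

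The only mildly delicate step is the convolution theorem for $f\in L^1$, $u\in L^2$. I would handle it by first proving the identity for $u\in\mathcal S$ and then passing to general $u\in L^2$ by density: both sides are continuous in $u$ in the $L^2$ norm, since $\|f*u\|_2\le\|f\|_1\|u\|_2$ and $\widehat f$ is bounded. If $f$ is complex-valued and even, the same computation gives the identity with the cross term appearing in its symmetric form, which is again $\int\widehat f\,|\widehat u|^2$ by evenness. So no reality assumption is really needed beyond what evenness provides.
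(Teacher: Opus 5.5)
Your proof is correct and follows essentially the same route as the paper: the diagonal terms come from Tonelli plus Plancherel, and the cross term is identified with $\langle f*u,u\rangle=\int\widehat f\,|\widehat u|^2$, with the evenness of $f$ used to symmetrize. The only differences are cosmetic. You expand the right-hand side rather than assembling it from the left, and you spell out the Fubini and $L^1*L^2$ convolution-theorem justifications that the paper leaves implicit.
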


\begin{proof}
% By a standard density argument, it is enough to prove the result for a test function $u\in \mathcal{_c(\R^d)$. The left-hand side of \eqref{eq divided} is
% \begin{equation*}
% \int_{\R^d} |\widehat{u}(\omega)|^2 \widehat{\gamma}(\omega)\, d\omega=\widehat{f}(0)\int_{\R^d} |\widehat{u}(\omega)|^2\, d\omega - \int_{\R^d} \widehat{f}(\omega)|\widehat{u}(\omega)|^2\, d\omega.
% \end{equation*}
The first summand on the left  of \eqref{eq divided} can be rewritten as
\begin{align*}
\widehat{f}(0)\int_{\R^d} |\widehat{u}(\omega)|^2\, d\omega&=\widehat{f}(0)\int_{\R^d} |u(x)|^2\, dx=\iint_{\R^d\times\R^d}|u(x)|^2f(\xi)\, dx\, d\xi\\
&=\iint_{\R^d\times\R^d}|u(x)|^2f(y-x)\, dx\, dy\\
&=\iint_{\R^d\times\R^d}|u(x)|^2f(x-y)\, dx\, dy,
\end{align*}
where we used the definition of Fourier transform, Plancherel's theorem, the change of variable $\xi=y-x$, and the fact that $f$ is even.

The second summand on the left of \eqref{eq divided} coincides with the inner product $\langle \widehat{f}\, \widehat{u},\widehat{u}\rangle$, which by the properties of the Fourier transform is equivalent to the inner product $\langle u * f,u\rangle$. Namely, we have
\begin{equation*}
\int_{\R^d} \widehat{f}(\omega)|\widehat{u}(\omega)|^2\, d\omega=\iint_{\R^d\times\R^d} u(y)f(x-y)\overline{u(x)}\, dx\, dy.
\end{equation*}
Summing up, we got
\begin{align*}
&\widehat{f}(0)\int_{\R^d} |\widehat{u}(\omega)|^2\, d\omega - \int_{\R^d} \widehat{f}(\omega)|\widehat{u}(\omega)|^2\, d\omega\\
&=\iint_{\R^d\times\R^d} \left(|u(x)|^2-u(y)\overline{u(x)}\right)f(x-y)\, dx\, dy.
\end{align*}
 
The same formula can be rewritten by relabeling $x$ by $y$ and vice versa, and then changing the resulting term $f(y-x)$ back to $f(x-y)$, since $f$ is even. Therefore, we finally obtain that

\begin{align*}
&\widehat{f}(0)\int_{\R^d} |\widehat{u}(\omega)|^2\, d\omega - \int_{\R^d} \widehat{f}(\omega)|\widehat{u}(\omega)|^2\, d\omega \\
%=\frac{1}{2}\iint_{\R^d\times\R^d} (|u(x)|^2 -u(x)\overline{u(y)} -u(y)\overline{u(x)}+|u(y)|^2)  f(x-y)\,  dx\,dy\\
&=\frac{1}{2}\iint_{\R^d\times\R^d} |u(x)-u(y)|^2 \, f(x-y)\,  dx\,dy.
\end{align*}
\end{proof}

% Let $\eta\in  L^1(\R^d)$ be a symmetric function \(\eta(z)=\eta(-z)\) such that 
%\[
% \int_{R^d} \eta = 0.
%\]
% Using this function, for any  $u\in L^2(\R^d)$ we define the quadratic energy:
% \[
% E(u):=\frac12\iint_{\R^d\times\R^d}
% \bigl(u(x)-u(y)\bigr)^2\,\eta(x-y)\,dx\,dy.
% \]

% \begin{lem}~\label{divided1}
% For any $u\in L^2(\R^d)$
% \[ 
%   E(u) = -\int_{\R^d}\widehat\eta(\xi)\,|\widehat u(\xi)|^2\,d\xi  
% \]
% \end{lem}
% \begin{proof}

% Expanding the square
% \[
% E(u)=\tfrac12\iint\bigl(u(x)^2+u(y)^2-2u(x)u(y)\bigr)\eta(x-y)\,dx\,dy.
% \]

% The first two integrals coincide by the symmetry of \(\eta\) and the value is 0 because we assume that the 0-moment of $\eta$ vanishes. Thus,
% \[
% E(u)= -\iint u(x)u(y)\,\eta(x-y)\,dx\,dy = -\int_{\R^d} u(x)\,(\eta*u)(x)\,dx.
% \]
% Passing to Fourier:
% \[
% (\eta*u)^{\wedge}(\xi)=\widehat\eta(\xi)\,\widehat u(\xi).
% \]
% Therefore,
% \[
% \int u(x)\,(\eta*u)(x)\,dx
% = \int \widehat u(\xi)\,\overline{\widehat\eta(\xi)\,\widehat u(\xi)}\,d\xi
% = \int \widehat\eta(\xi)\,|\widehat u(\xi)|^2\,d\xi.
% \]

% We obtain the identity
% \[
% E(u)=
% \int_{\R^d}-\widehat\eta(\xi)\,|\widehat u(\xi)|^2\,d\xi.
% \]
% \end{proof} 

\begin{teo}\label{thm:smooth} 
Let $g\in C^\infty_c(\R)$ be a smooth, even function with compact support such that $g(0) = 0$ and 
\[
\int_0^\infty \frac{g(t)}{t^2} \, dt \ne 0.
\]
For any Caccioppoli set $\Omega\subset \R^d$ we have
\begin{equation}\label{smooth}
    \lim_{L\to \infty} L \int_{\R^d}  | \widehat{\chi_\Omega}|^2(\xi)\, g \left(\frac{|\xi|}{L}\right) d\xi = \frac{1}{2 \pi ^2}\left(\int_0^\infty \frac{g(t)}{t^2} \, dt\right) \Per(\Omega).
\end{equation}
\end{teo}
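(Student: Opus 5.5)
Since $g(|\xi|)$ is a smooth, compactly supported, radial function of $\xi$ (because $g$ is even and smooth), it is the Fourier transform of a radial Schwartz function $f$ on $\R^d$; write $\widehat f(\xi)=g(|\xi|)$, so $f$ is even, integrable and $\widehat f(0)=g(0)=0$. With $f_L(x)=L^d f(Lx)$ we have $\widehat{f_L}(\xi)=g(|\xi|/L)$. Lemma~\ref{divided} applied with $f_L$ and $u=\chi_\Omega$, together with $\widehat{f_L}(0)=0$, gives
\[
L\int_{\R^d}|\widehat{\chi_\Omega}(\xi)|^2 g(|\xi|/L)\,d\xi=-\frac L2\iint_{\R^d\times\R^d}|\chi_\Omega(x)-\chi_\Omega(y)|\,f_L(x-y)\,dx\,dy,
\]
where we used $|\chi_\Omega(x)-\chi_\Omega(y)|^2=|\chi_\Omega(x)-\chi_\Omega(y)|$. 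Now set $\rho(x)=|x|f(x)$. This is radial and integrable, since $f$ is Schwartz, and $\rho_L(z)=L^d\rho(Lz)=L|z|\,f_L(z)$. Hence $L f_L(x-y)=\rho_L(x-y)/|x-y|$, and the right-hand side is exactly $-\tfrac12$ times the Dávila functional of Corollary~\ref{char Davila} with kernel $\rho$.

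Corollary~\ref{char Davila} needs $\int\rho=1$, but by the footnote to Theorem~\ref{teo Davila} and linearity it holds for any integrable radial $\rho$ with limit $\bigl(\int\rho\bigr)K_d\Per(\Omega)$. If $\int\rho=0$ the limit is $0$; this is consistent with the formula, because the constant identity below shows that $\int\rho=0$ exactly when $\int_0^\infty g(t)t^{-2}\,dt=0$. The nonvanishing hypothesis is therefore only a normalization convenience, and the splitting argument of the footnote covers signed $\rho$ in general. We conclude that the limit in \eqref{smooth} equals $-\tfrac12 K_d\bigl(\int_{\R^d}|x|f(x)\,dx\bigr)\Per(\Omega)$.

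The remaining step, which I expect to be the only real computation, is to prove
\[
-\frac{K_d}{2}\int_{\R^d}|x|f(x)\,dx=\frac1{2\pi^2}\int_0^\infty\frac{g(t)}{t^2}\,dt .
\]
The plan is to reduce to one dimension. Write $|x|=K_d^{-1}\fint_{\mathbb S^{d-1}}|\langle x,\omega\rangle|\,d\sigma(\omega)$, and use rotation invariance to fix $\omega=e_1$. Then
\[
K_d\int_{\R^d}|x|f(x)\,dx=\int_{\R^d}|x_1|f(x)\,dx=\int_\R|s|\,h(s)\,ds,
\]
where $h(s)=\int f(s,x')\,dx'$ is the one-dimensional marginal of $f$. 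By the projection–slice theorem, $\widehat h(\tau)=g(|\tau|)$. It remains to evaluate $\int_\R|s|h(s)\,ds$ for an even Schwartz $h$ with $\widehat h(0)=0$. Since $\widehat{|s|}$ is, as a tempered distribution, $-\frac{1}{2\pi^2}\,\mathrm{f.p.}\,|\tau|^{-2}$, and $\widehat h(0)=0$ together with $\widehat h'(0)=0$ (by evenness) removes the finite-part correction, we get
\[
\int_\R|s|h(s)\,ds=-\frac1{2\pi^2}\int_\R\frac{g(|\tau|)}{\tau^2}\,d\tau=-\frac1{\pi^2}\int_0^\infty\frac{g(t)}{t^2}\,dt .
\]
Multiplying by $-\tfrac12$ gives the claimed constant.

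To check this finite-part step directly, I would first write $|s|=\frac{1}{\pi}\int_\R\frac{1-\cos(2\pi s\tau)}{\tau^2}\,d\tau$, since $\int_\R(1-\cos(a\tau))\tau^{-2}\,d\tau=\pi|a|$. Then I would integrate against $h$ with Fubini, which is justified because $h$ is Schwartz and $(1-\cos)/\tau^2$ is integrable. Using $\int h=0$ and $\int h(s)\cos(2\pi s\tau)\,ds=g(|\tau|)$ confirms the formula.
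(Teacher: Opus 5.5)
Your proof is correct apart from one constant slip, noted at the end. Its skeleton is the paper's. The paper also takes $\rho$ proportional to $|x|\,\widehat{g(|\cdot|)}(x)$, applies Lemma~\ref{divided} to $\eta_L$ with $\eta=\rho/|x|$ (so $\widehat{\eta}(0)=0$), uses $|\chi_\Omega(x)-\chi_\Omega(y)|^2=|\chi_\Omega(x)-\chi_\Omega(y)|$, and concludes with Dávila's formula.

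You differ from the paper in two places.

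\emph{Normalization.} The paper normalizes $\int\rho=1$ through a constant $C$, and this is exactly where the hypothesis $\int_0^\infty g(t)t^{-2}\,dt\neq0$ is used. By invoking the signed version of Dávila's theorem from the footnote, together with linearity, you show this hypothesis is unnecessary: both sides are zero when that integral vanishes.

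\emph{The constant.} The paper writes $\int_{\R^d}|x|\,\widehat{g(|\cdot|)}(x)\,dx=\frac{1}{2\pi}(-\Delta)^{1/2}g(|\cdot|)(0)$. It then expands the half-Laplacian as a singular integral with its Gamma-function normalization and passes to polar coordinates, so $K_d$ only cancels at the very end against an explicit ratio of Gamma functions. Your route averages $|\langle x,\omega\rangle|$ over the sphere, which is the definition of $K_d$, and then applies the projection--slice theorem. This makes $K_d$ cancel structurally, needs only the one-dimensional identity $\int_\R(1-\cos a\tau)\tau^{-2}\,d\tau=\pi|a|$, and shows why the final constant does not depend on $d$.

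\emph{The slip.} With $a=2\pi s$ that identity gives $\int_\R\frac{1-\cos(2\pi s\tau)}{\tau^2}\,d\tau=2\pi^2|s|$. So the correct representation is $|s|=\frac{1}{2\pi^2}\int_\R\frac{1-\cos(2\pi s\tau)}{\tau^2}\,d\tau$, not $\frac{1}{\pi}\int_\R\cdots$. With your factor $\frac1\pi$, the ``direct check'' would return $-\frac{2}{\pi}\int_0^\infty g(t)t^{-2}\,dt$. The correct value is $-\frac{1}{\pi^2}\int_0^\infty g(t)t^{-2}\,dt$, which you did obtain from $\widehat{|s|}=-\frac{1}{2\pi^2}\,\mathrm{f.p.}\,|\tau|^{-2}$.
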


\begin{proof}
The fact that $g$ is smooth and even on $\R$ implies that $g(|\cdot|)$ is smooth on $\R^d$, besides being obviously even and compactly supported. Therefore, its Fourier transform decays faster than any polynomial at infinity, ensuring that the function
\[
\rho(x) := C |x| \int_{\R^d} g(|\xi|) e^{-2\pi i x \cdot \xi}\,  d\xi,
\]
is integrable. In particular, we can choose $C$ so that $\int_{\R^d} \rho(x)\, dx = 1$. Therefore

\begin{align}\label{comp}
C^{-1} = \int_{\R^d} |x|\widehat {g(|\cdot|)}(x)\, dx &=  \frac 1{2\pi}(-\Delta)^{1/2}g(0) \\
&= \frac{\Gamma\left((d+1)/2\right)}{2\pi^{(d+3)/2}}  \int_{\R^d} \frac{g(0)-g(|x|)}{|x|^{d+1}}\, dx \nonumber \\
&= 
-\frac{\Gamma\left((d+1)/2\right)}{\pi^{3/2}\Gamma\left(d/2\right)} \int_0^\infty \frac{g(t)}{t^2}\, dt \ne 0.\nonumber
\end{align}
Now, set $\rho_L(x)=L^d\rho(L x)$, so that
\begin{equation}\label{eq:basic}
    \begin{split}
&\iint_{\R^d\times\R^d}
\frac{\bigl|\chi_\Omega(x)-\chi_\Omega(y)\bigr|}{|x-y|}\,\rho_L(x-y)\,dx\,dy  \\
&=L \iint_{\R^d\times\R^d}
\bigl(\chi_\Omega(x)-\chi_\Omega(y)\bigr)^2\,L^d\frac{\rho((x-y)L)}{|x-y|L}\,dx\,dy \\
&= L \iint_{\R^d\times\R^d}
\bigl(\chi_\Omega(x)-\chi_\Omega(y)\bigr)^2\, \eta_L(x-y)\,dx\,dy,
\end{split}
\end{equation}
where $\eta(x) = \rho(x)/|x| = C \widehat{g(|\cdot|)}(x)$. Since 
\[
\int_{\R^d} \eta_L(x)\,dx=\widehat{\eta_L}(0)=\widehat{\eta}(0)=Cg(0)=0,
\]
we can apply Lemma~\ref{divided} to the function $f = \eta_L$, and we get
\begin{equation}
\begin{split}
\iint_{\R^d\times\R^d}
\bigl(\chi_\Omega(x)-\chi_\Omega(y)\bigr)^2\, \eta_L(x-y)\,dx\,dy &= -2 \int_{\R^d} |\widehat{\chi_\Omega}(\omega)|^2 \widehat \eta(\omega/L)\, d\omega \\
&=  -2C \int_{\R^d} |\widehat{\chi_\Omega}(\omega)|^2 g(|\omega|/L)\, d\omega.
\end{split}
\end{equation}
If $\Omega$ is a Caccioppoli set, since  $\rho$ is integrable and radial (by the rotation invariance of the Fourier transform), and $\rho_L(x)=L^d\rho(L x)$, by \eqref{eq Davila} we get
\begin{equation*}
\begin{split}
&\lim_{L \to \infty} L\int_{\R^d} |\widehat{\chi_\Omega}(\omega)|^2 g(|\omega|/L)\, d\omega\\
&=  -\frac{1}{2C}\lim_{L\to\infty} \iint_{\R^d\times\R^d}
\frac{\bigl|\chi_\Omega(x)-\chi_\Omega(y)\bigr|}{|x-y|}\,\rho_L(x-y)\,dx\,dy  \\
&= \frac{1}{2\pi ^2}\left(\int_0^\infty \frac{g(t)}{t^2} \, dt\right) \Per(\Omega).
\end{split}
\end{equation*}
\end{proof}

\textbf{Proof of the Claim.}
We cannot apply \eqref{smooth} directly with $g(t) = t^2\chi_{[-1,1]}(t)$, because $g$ is not smooth. But given any $\varepsilon>0$,
we can squeeze $f \le g \le h$, where $f$ and $h$ are smooth, even, non-negative functions with compact support, vanishing at the origin and such that $\int_0^\infty \frac{h(t)-f(t)}{t^2}\, dt\le \varepsilon$. Since all three functions are non-negative, we have
\begin{equation}\label{liminf f<g}
   \liminf_{R\to\infty}  R \int_{\R^d}|\widehat{\chi_\Omega}(\xi)|^2 f\left(\frac{|\xi|}{R}\right)\, d\xi\le \liminf_{R\to \infty} R \int_{\R^d} |\widehat{\chi_\Omega}(\xi)|^2 g\left(\frac{|\xi|}{R}\right)\, d\xi, 
\end{equation}
and 
\[
\limsup_{R\to\infty}  R \int_{\R^d}|\widehat{\chi_\Omega}(\xi)|^2 g\left(\frac{|\xi|}{R}\right)\, d\xi\le \limsup_{R\to \infty} R \int_{\R^d} |\widehat{\chi_\Omega}(\xi)|^2 h\left(\frac{|\xi|}{R}\right)\, d\xi.
\]
If $\Omega$ is a Caccioppoli set, we can apply Theorem~\ref{thm:smooth} to $f$ and $h$ and letting $\varepsilon\to 0$ we get
\[
\begin{split}
\lim_{R\to\infty} \frac{1}{R}\int_{B(0,R)} |\xi|^2 |\widehat{\chi_\Omega}(\xi)|^2\, d\xi& = \lim_{R\to\infty}  R \int_{\R^d}|\widehat{\chi_\Omega}(\xi)|^2 g\left(\frac{|\xi|}{R}\right)\, d\xi \\
&= \frac{\Per(\Omega)}{2\pi ^2} \int_0^\infty \frac{g(t)}{t^2}\, dt = \frac{\Per(\Omega)}{2\pi ^2}. 
\end{split}
\]\qed

\section{Spectral and nonlocal asymptotic formulas for the Jump}\label{Jump formulae}

Recall formula \eqref{eq BeGe} from Theorem~\ref{teo Beretti}, recently obtained by Beretti and Gennaioli, which provides an expression for the jump of
a bounded function $u:\R^d\to\R$ of bounded variation:
\begin{equation*}
\lim_{R\to\infty} \frac{1}{R} \int_{|\omega|\leq R} |\widehat{u}(\omega)|^2\,|\omega|^2\,d\omega = \frac{\mathcal{J}(u)}{2\pi^2}.
\end{equation*}
In this section we will show that there exists an entire one parameter family of formulas for the jump, to which formula \eqref{eq BeGe} belongs. We will obtain such a family of formulas as a consequence of the following general result.

\begin{pro}\label{global}
Let $C\in \R$ and $F:[0,\infty)\to\R$ belong to $L^1_{loc}([0,\infty))$. If, for some $0\le\alpha< 1$,
\begin{equation}\label{alpha<1 limit}
    \lim_{R \to \infty} \frac{1 - \alpha}{R^{\alpha - 1}} \int_R^\infty r^\alpha F(r) \, dr = C,
\end{equation}
or, for some $\alpha>1$,
\begin{equation}\label{alpha>1 limit}
    \lim_{R\to\infty}\frac{\alpha-1}{R^{\alpha-1}}\int_0^R r^{\alpha}\,F(r)\,dr = C,
\end{equation}
then \eqref{alpha<1 limit} holds for every $\alpha\in [0,1)$, \eqref{alpha>1 limit} holds for every $\alpha> 1$, and additionally
\begin{equation}\label{alpha=1 limit}
    \lim_{R\to\infty}\frac{1}{\log R}\int_0^R r\,F(r)\,dr = C.
\end{equation}

\end{pro}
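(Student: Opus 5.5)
The plan is to pass between the different weights $r^\alpha$ and $r^\beta$ by a single Abel summation (integration by parts against $r^{\beta-\alpha}$), using only the assumed asymptotics of one primitive. No positivity of $F$ is used: the argument is purely Abelian, and all error terms are $o(\cdot)$ quantities that get integrated against explicit powers.

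\emph{Step 1: starting from \eqref{alpha>1 limit} for some $\alpha>1$.} Set $A(R)=\int_0^R r^\alpha F(r)\,dr$, so that $A(R)=\frac{C}{\alpha-1}R^{\alpha-1}+o(R^{\alpha-1})$. Near the origin we have $|A(r)|\le r^\alpha\int_0^1|F|$ for $r\le 1$, since $F\in L^1_{loc}$. For any $\beta$, integration by parts gives
\[
\int_0^R r^\beta F(r)\,dr=\int_0^R r^{\beta-\alpha}\,dA(r)=R^{\beta-\alpha}A(R)-(\beta-\alpha)\int_0^R r^{\beta-\alpha-1}A(r)\,dr .
\]
The bound on $A$ near the origin makes the integrand $O(r^{\beta-1})$ there, which is integrable for $\beta>0$. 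We then treat three ranges of $\beta$.
\begin{itemize}
\item[$\beta>1$:] Plugging in the asymptotics gives $\frac{C}{\alpha-1}R^{\beta-1}\bigl(1-\frac{\beta-\alpha}{\beta-1}\bigr)+o(R^{\beta-1})=\frac{C}{\beta-1}R^{\beta-1}+o(R^{\beta-1})$. The contribution of $[0,1]$ is $O(1)$, which is negligible.
\item[$\beta=1$:] The boundary term tends to $\frac{C}{\alpha-1}$. The integral term equals $(\alpha-1)\int_1^R r^{-\alpha}A(r)\,dr+O(1)=C\log R+o(\log R)$, which gives \eqref{alpha=1 limit}.
\item[$\beta\in[0,1)$:] Apply the same identity on $[R,T]$ and let $T\to\infty$. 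Since $T^{\beta-\alpha}A(T)=O(T^{\beta-1})\to0$ and $r^{\beta-\alpha-1}A(r)=O(r^{\beta-2})$ is integrable at infinity, the improper integral $\int_R^\infty r^\beta F$ exists. It equals $-R^{\beta-\alpha}A(R)+(\alpha-\beta)\int_R^\infty r^{\beta-\alpha-1}A(r)\,dr$, and inserting the asymptotics gives
\[
\frac{C}{\alpha-1}R^{\beta-1}\Bigl(-1+\frac{\alpha-\beta}{1-\beta}\Bigr)+o(R^{\beta-1})=\frac{C}{1-\beta}R^{\beta-1}+o(R^{\beta-1}),
\]
which is \eqref{alpha<1 limit} for $\beta$.
\end{itemize}

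\emph{Step 2: starting from \eqref{alpha<1 limit} for some $\alpha\in[0,1)$.} Here the hypothesis includes the convergence of the improper integral. Set $B(R)=\int_R^\infty r^\alpha F(r)\,dr=\frac{C}{1-\alpha}R^{\alpha-1}+o(R^{\alpha-1})$. Then $\int_1^R r^\beta F\,dr=-\int_1^R r^{\beta-\alpha}\,dB(r)$, and the same integration by parts applies. For $\beta>1$ we get $\frac{C}{\beta-1}R^{\beta-1}+o(R^{\beta-1})$; for $\beta=1$ we get $C\log R+o(\log R)$; and on $[0,1]$ the contribution is $O(1)$, which is negligible because $R^{\beta-1},\log R\to\infty$. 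Once \eqref{alpha>1 limit} holds for one exponent larger than $1$, Step 1 yields all the remaining cases, including every other $\beta\in[0,1)$. Alternatively, one can integrate by parts on $[R,\infty)$ directly.

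\emph{Main obstacle.} The only delicate point is bookkeeping rather than ideas. One has to check that the $o(r^{\alpha-1})$ remainders, once multiplied by $r^{\beta-\alpha-1}$ and integrated, remain $o$ of the main term. This uses Cesàro-type reasoning: split at a large $r_0$ beyond which the remainder is $\le\varepsilon r^{\alpha-1}$. In the $\beta<1$ case one must also justify the existence of the improper integrals at infinity, and in the $\beta=1$ case the $\varepsilon$-split is what turns $\int_1^R r^{-1}o(1)\,dr$ into $o(\log R)$. I would write the remainder estimate once as a general lemma (if $h(r)=o(r^\gamma)$ then $\int_1^R r^{\delta}h=o(\int_1^R r^{\gamma+\delta})$ whenever the right side diverges, and the analogous tail statement) and invoke it in each case.
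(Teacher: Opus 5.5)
Your proposal is correct and follows essentially the same route as the paper: integrate by parts to pass from the weight $r^\beta$ to the weight $r^\alpha$ via the factor $r^{\alpha-\beta}$, then average the remainder Ces\`aro-style. The paper handles that averaging with L'H\^opital's rule applied to $\varphi_\beta$ and treats an arbitrary starting exponent $\beta\neq 1$ in one pass, whereas you use an explicit $\varepsilon$-split and route the case $\alpha<1$ through one exponent larger than $1$; these differences are cosmetic.
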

\begin{proof}
Set $G_\alpha(R)=\int_{E_{\alpha,R}} r^\alpha F(r)\,dr$, where $E_{\alpha,R}=[0,R]$ when $\alpha\ge 1$ and $E_{\alpha,R}=[R,\infty)$ when $\alpha\in (0,1)$, and define $\varphi_\alpha(R)=|\alpha-1|G_\alpha(R)/R^{\alpha-1}$, for $\alpha\ne 1$ and $\varphi_1(R)=G_1(R)/\log R$.
% Then the condition $\lim_{R\to\infty}\varphi_\alpha(R)=C$ is equivalent to \eqref{alpha>1 limit} for $\alpha>1$ and to \eqref{alpha<1 limit} for $\alpha\in (0,1)$.

Let $\beta\ne 1$ be a nonnegative number, and assume that $\lim_{R\to\infty}\varphi_\beta(R)=C$. We want to show that this implies $\lim_{R\to\infty}\varphi_\alpha(R)=C$ for every $\alpha\ge 0$.

Observe that, for any $\alpha\ge 0$,
\begin{equation}\label{G alpha}
    G_\alpha(R)=\frac{\beta-1}{|\beta-1|}\int_{E_{\alpha,R}} r^{\alpha-\beta}\,G_\beta'(r)\,dr.
\end{equation}
For the sake of clarity, we now distinguish the two regimes $\alpha\in [0,1)$ and $\alpha\ge 1$ and we study them separately.

\textbf{Case 1: $\alpha\in [0,1)$.}
Integrating by parts, we can see that the integral in the right-hand side of \eqref{G alpha} is equivalent to
\[
\frac{\beta-1}{|\beta-1|}\left(\lim_{r\to\infty}r^{\alpha-\beta}\,G_\beta(r)-R^{\alpha-\beta}\,G_\beta(R)- (\alpha-\beta)\int_R^\infty r^{\alpha-\beta-1}\,G_\beta(r)\,dr\right).
\]
Since
\[
\lim_{r\to\infty}r^{\alpha-\beta}\,G_\beta(r)=\lim_{r\to\infty}\frac{r^{\alpha-1}}{|\beta-1|}\varphi_\beta(r)=\frac{C}{|\beta-1|}\lim_{r\to\infty}r^{\alpha-1}=0,
\]
multiplying by $|\alpha-1|/R^{\alpha-1}$ both sides of \eqref{G alpha} we get
\begin{equation}\label{phi alpha <1}
    \varphi_\alpha(R)=\frac{\alpha-1}{\beta-1}\,\varphi_\beta(R) + \frac{\alpha-\beta}{\beta-1}\,\frac{\alpha-1}{R^{\alpha-1}}\int_R^\infty r^{\alpha-2}\,\varphi_\beta(r)\,dr.
\end{equation}
By the L'Hopital's rule 
\[
\lim_{R\to\infty}\frac{\alpha-1}{R^{\alpha-1}}\int_R^\infty r^{\alpha-2}\,\varphi_\beta(r)\,dr=\lim_{R\to\infty}(-\varphi_\beta(R))=-C,
\]
so by taking limits in \eqref{phi alpha <1} we get
\[
\lim_{R\to\infty}\varphi_\alpha(R)=\frac{\alpha-1}{\beta-1}\,C- \frac{\alpha-\beta}{\beta-1}\,C=C.
\]

\textbf{Case 2: $\alpha\ge1$.}
In this case, again by integration by parts, the integral in the right-hand side of \eqref{G alpha} is equivalent to
\[
\frac{\beta-1}{|\beta-1|}\left(R^{\alpha-\beta}\,G_\beta(R) - \lim_{r\to 0} r^{\alpha-\beta}\,G_\beta(r)-(\alpha-\beta)\int_0^R r^{\alpha-\beta-1}\,G_\beta(r)\,dr\right).
\]
We claim that if $F\in L^1_{loc}([0,\infty))$, then $\lim_{r\to 0} r^{\alpha-\beta}\,G_\beta(r)=0$. Indeed, if $\beta>1$,
\[
|r^{\alpha-\beta}\,G_\beta(r)|=r^{\alpha-\beta}\left|\int_0^r s^\beta F(s) \, ds\right|\le r^\alpha\int_0^r |F(s)| \, ds \longrightarrow 0, \, \text{as }r\to 0.
\]
On the other hand, if $\beta\in [0,1)$, then $\alpha-\beta>0$, so that $\lim_{r\to 0}r^{\alpha-\beta}=0$, and it is enough to check that $G_\beta(r)$ converges to some finite constant. This actually happens, as one can see by writing
\[
G_\beta(r)=\int_r^\infty s^\beta F(s) \, ds= \int_r^R s^\beta F(s) \, ds+\int_R^\infty s^\beta F(s) \, ds,
\]
and observing that the first integral in the right hand-side is finite for any $R$ because $F\in L^1_{loc}([0,\infty))$, and that $R$ can be chosen large enough so that the second integral is also finite, since \eqref{alpha<1 limit} holds for $\alpha=\beta$.

Then the claim is proved. Now we distinguish two sub-cases. Suppose first that $\alpha>1$. Then, multiplying both sides of \eqref{G alpha} by $(|\alpha-1|)/R^{\alpha-1}$ we get
\begin{equation}\label{phi alpha>1}
    \varphi_\alpha(R)=\frac{\alpha-1}{\beta-1}\,\varphi_\beta(R) - \frac{\alpha-\beta}{\beta-1}\,\frac{\alpha-1}{R^{\alpha-1}}\int_0^R r^{\alpha-2}\,\varphi_\beta(r)\,dr.
\end{equation}
By the L'Hopital's rule 
\[
\lim_{R\to\infty}\frac{\alpha-1}{R^{\alpha-1}}\int_0^R r^{\alpha-2}\,\varphi_\beta(r)\,dr=\lim_{R\to\infty}\varphi_\beta(R)=C,
\]
so by taking limits in \eqref{phi alpha>1} we get
\[
\lim_{R\to\infty}\varphi_\alpha(R)=\frac{\alpha-1}{\beta-1}\,C- \frac{\alpha-\beta}{\beta-1}\,C=C.
\]

Suppose now that $\alpha=1$. Multiplying both sides of \eqref{G alpha} by $1/\log R$ we get
\begin{equation}\label{phi alpha=1}
    \varphi_1(R)=\frac{1}{\log R}\frac{\varphi_\beta(R)}{\beta-1} - \frac{1-\beta}{\beta-1}\,\frac{1}{\log R}\int_0^R \frac{\varphi_\beta(r)}{r}\,dr.
\end{equation}
By the L'Hopital's rule 
\[
\lim_{R\to\infty}\frac{1}{\log R}\int_0^R \frac{\varphi_\beta(r)}{r}\,dr=\lim_{R\to\infty}\varphi_\beta(R)=C,
\]
so by taking limits in \eqref{phi alpha=1} we get
\[
\lim_{R\to\infty}\varphi_1(R)=C.
\]
\end{proof}

\begin{rem}
    While both \eqref{alpha<1 limit} and \eqref{alpha>1 limit} are enough to imply \eqref{alpha=1 limit}, the opposite in general fails. Consider the following example.
\end{rem}
\noindent\textbf{Counterexample.}
Let
\[
F(r) = \begin{cases} \dfrac{1+\sin(\log r)}{r^2}, & r\ge 1,\\[4pt] 0, & r<1. \end{cases}
\]
Writing $T=\log R$,
\[
\int_0^R r\,F(r)\,dr
= \int_1^R \frac{1+\sin(\log r)}{r}\,dr
= \int_0^T (1+\sin u)\,du
= T - \cos T + 1,
\]
so
\[
\lim_{R\to\infty}\frac{1}{\log R}\int_0^R r\,F(r)\,dr
= \lim_{T\to\infty}1 - \frac{\cos T}{T} + \frac{1}{T}
= 1,
\]
hence \eqref{alpha=1 limit} holds. We show now that \eqref{alpha>1 limit} fails for $\alpha=2$, and, hence, \eqref{alpha>1 limit} fails for every $\alpha>1$ and \eqref{alpha=1 limit} fails for every $\alpha\in [0,1)$.

With the substitution $u=\log r$, we get
\[
\int_0^R r^2\,F(r)\,dr
= \int_1^R \bigl(1+\sin(\log r)\bigr)\,dr
= \int_0^T (1+\sin u)\,e^u\,du.
\]
Since $\displaystyle\int_0^T \sin(u)\,e^u\,du = \frac{e^T(\sin T - \cos T)+1}{2}$, we obtain
\[
\int_0^R r^2\,F(r)\,dr
= e^T + \frac{e^T(\sin T - \cos T)+1}{2}.
\]
Dividing by $R=e^T$,
\[
\frac{1}{R}\int_0^R r^2\,F(r)\,dr
= 1 + \frac{\sin T - \cos T}{2} + \frac{1}{2e^T},
\]
which oscillates between $1-\frac{\sqrt{2}}{2}$ and $1+\frac{\sqrt{2}}{2}$ and does not converge.

\medskip

\noindent Thanks to Proposition~\ref{global} we can generalize formula \eqref{eq BeGe} as follows.

\begin{pro}\label{formulas Jump1}
Let $u\in \mathrm{BV}(\R^d)\cap L^\infty(\R^d)$. Then,
\begin{itemize}
\item[(i)] $\displaystyle
\lim_{R\to\infty} \frac{1-\alpha}{R^{\alpha-1} }\int_{|\omega| \ge R} |\widehat u (\omega)|^2 |\omega|^\alpha\, d\omega =\frac{\mathcal{J}(u)}{2\pi^2}, \qquad \alpha \in [0,1)$.
\item[(ii)] 
$\displaystyle\lim_{R\to\infty} \frac{1}{\log R} \int_{|\omega| \le R} |\widehat u (\omega)|^2 |\omega|\, d\omega = \frac{\mathcal{J}(u)}{2\pi^2}$.
\item[(iii)] $\displaystyle
\lim_{R\to\infty} \frac{\alpha-1}{R^{\alpha-1}} \int_{|\omega| \le R} |\widehat u (\omega)|^2 |\omega|^{\alpha}\, d\omega =\frac{\mathcal{J}(u)}{2\pi^2}, \qquad \alpha>1$.

\end{itemize}
\end{pro}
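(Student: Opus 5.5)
The idea is to reduce Proposition~\ref{formulas Jump1} to Proposition~\ref{global} by passing to polar coordinates in frequency. We set
\[
F(r) := r^{d-1}\int_{\mathbb S^{d-1}} |\widehat u(r\theta)|^2\, d\sigma_{d-1}(\theta), \qquad r\ge 0.
\]
Then for every $\alpha\ge0$,
\[
\int_{|\omega|\le R}|\widehat u(\omega)|^2|\omega|^\alpha\,d\omega=\int_0^R r^\alpha F(r)\,dr,
\]
and similarly for $|\omega|\ge R$. Since $u\in \mathrm{BV}\cap L^\infty\subset L^1\cap L^\infty\subset L^2$, Plancherel gives $\int_0^\infty F(r)\,dr=\|u\|_2^2<\infty$. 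Hence $F\ge0$ and $F\in L^1_{loc}([0,\infty))$, indeed $F\in L^1([0,\infty))$.

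With this notation, Theorem~\ref{teo Beretti} (Beretti--Gennaioli) reads
\[
\lim_{R\to\infty}\frac{1}{R}\int_0^R r^2F(r)\,dr=\frac{\mathcal J(u)}{2\pi^2}.
\]
This is exactly hypothesis \eqref{alpha>1 limit} with $\alpha=2$ (so $\alpha-1=1$) and $C=\mathcal J(u)/(2\pi^2)$.

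Proposition~\ref{global} then yields three conclusions. First, \eqref{alpha>1 limit} holds for every $\alpha>1$, which is item (iii). Second, \eqref{alpha=1 limit} holds, which is item (ii). Third, \eqref{alpha<1 limit} holds for every $\alpha\in[0,1)$, which after undoing the polar coordinates is item (i).

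The only delicate point is checking that Proposition~\ref{global} really applies, in particular the passage from $\beta>1$ to $\alpha\in[0,1)$, where the proof integrates by parts on $[R,\infty)$ and needs $\int_R^\infty r^\alpha F(r)\,dr$ to be finite and the boundary terms to vanish. Here this is unproblematic. Because $\int_0^R r^2F\,dr = O(R)$, a dyadic decomposition gives
\[
\int_{2^k}^{2^{k+1}} r^\alpha F\,dr\lesssim 2^{k(\alpha-1)},
\]
which is summable for $\alpha<1$. So the tail integrals converge, consistent with Lemma~\ref{lemma_inclusion}. The boundary term $r^{\alpha-\beta}G_\beta(r)\to0$ as $r\to\infty$ was already handled in the proof of Proposition~\ref{global}. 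Finally, the case $\alpha=0$ is covered since $F\in L^1$. Beyond this bookkeeping the argument is immediate.
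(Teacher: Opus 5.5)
Your proposal is correct and follows the paper's proof essentially verbatim. It uses the same polar function $F(r)=r^{d-1}\int_{\mathbb S^{d-1}}|\widehat u(r\theta)|^2\,d\sigma_{d-1}(\theta)$, the Beretti--Gennaioli formula as the input \eqref{alpha>1 limit} with $\alpha=2$ and $C=\mathcal{J}(u)/(2\pi^2)$, and then Proposition~\ref{global}. Your extra checks ($F\in L^1$ by Plancherel, and finiteness of the tails $\int_R^\infty r^\alpha F\,dr$ via the dyadic bound) are valid and only make explicit what the paper leaves implicit.
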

\begin{proof}
Observe that (i), (ii) and (iii) are equivalent, respectively, to \eqref{alpha<1 limit}, \eqref{alpha=1 limit} and \eqref{alpha>1 limit} with $C=\mathcal{J}(u)/(2\pi^2)$ and
\[
F(r) = r^{d-1}\int_{\mathbb S^{d-1}} |\widehat{u}(r\omega)|^2 d\sigma_{d-1}(\omega),
\]
where $\sigma_{d-1}$ is the surface area measure on the unit sphere $\mathbb S^{d-1}$. Since (iii) holds for $\alpha=2$, as in this case it coincides with \eqref{eq BeGe} proved in \cite[Corollary 1.3]{BerGen2024}, then, by Proposition \eqref{global}, (i), (ii) and (iii) are automatically satisfied.
\end{proof}
 
% \begin{rem}
%     From (iii) above it follows easily that $\mathrm{BV}\cap L^\infty(\R^d)\subset H^{\alpha/2}(\R^d)$ for $0<\alpha<1$ and the inclusion is strict.
% \end{rem}
\begin{rem}
Note that, for $\alpha=0$ we recover

\begin{equation}\label{jump2}
\lim_{R\to\infty}  R \int_{|\omega|\geq R} |\widehat{u}(\omega)|^2\, d\omega=\frac{\mathcal{J}(u)}{2\pi^2},
\end{equation}
already obtained in \cite[Corollary 1.3]{BerGen2024},
which will be particularly important in the sequel. 
\end{rem}

\noindent\textbf{Proof of Theorem~\ref{Jump con Fourier}.} We will first prove i.). Let $\alpha>1$. Given $\varepsilon>0$, let $\eta \in (0,1)$ and $M\geq 1$ to be defined. Then, let
\begin{align*}
 &R \int_{\R^d} |\widehat{u}(\omega)|^2 \mathcalboondox{s}(|\omega|/R)\, d\omega =I_1(R)+I_2(R)+I_3(R)\\
 &=R\left( \int_{|\omega|\le\eta R}+\int_{\eta R\le|\omega|\le MR}+\int_{|\omega|\ge MR}|\widehat{u}(\omega)|^2 \mathcalboondox{s}(|\omega|/R)\, d\omega \right).   
\end{align*}

We will prove that $I_1(R)$ and $I_3(R)$ are small and $I_2(R)$ is close to the desired limit.

Let us start with $I_1(R)$. By choosing $\eta$ small enough so that, in the region $|\omega|\le \eta R$, $|\mathcalboondox{s}(|\omega|/R) - (|\omega|/R)^\alpha| \le(|\omega|/R)^\alpha$, by the triangular inequality, we get
\begin{equation*}
\begin{split}
        |I_1(R)|&\le \frac{2}{R^{\alpha-1}}\int _{|\omega|\leq \eta R}|\omega|^\alpha |\widehat{u}(\omega)|^2\,d\omega.
\end{split}
\end{equation*}
Now, choose $\eta$ small enough such that it also holds
\[
\eta^{\alpha-1}  \frac{\mathcal{J}(u) }{\pi^2(\alpha-1)}< \frac{\varepsilon}{6}.
\]
Then by another triangular inequality,
\begin{equation*}
\begin{split}
        |I_1(R)|\le 2\eta^{\alpha-1}\left| \frac{1}{(\eta R)^{\alpha-1} }\int _{|\omega|\leq \eta R}|\omega|^\alpha |\widehat{u}(\omega)|^2\,d\omega - \frac{\mathcal{J}(u) }{2\pi^2(\alpha-1)}\right|+\frac{\varepsilon}{6}
\end{split}
\end{equation*}

Once $\eta$ is fixed, small enough so that the above inequality holds, by Proposition~\ref{formulas Jump1}, it is possible to take $R_0$ so that for every $R\geq R_0$ 
\begin{equation*}
    \left| \frac{1}{(\eta R)^{\alpha-1} }\int _{|\omega|\leq \eta R}|\omega|^\alpha |\widehat{u}(\omega)|^2\,d\omega - \frac{\mathcal{J}(u) }{2\pi^2(\alpha-1)}\right| \leq \frac{\varepsilon}{12}.
\end{equation*}

Hence, for every $R\geq R_0$, we have that
\[
I_1(R)\le \eta^{\alpha-1}\frac{\varepsilon}{6}+\frac{\varepsilon}{6}<\frac{\varepsilon}{3}.
\]

Now we will study the third integral $I_3(R)$. 
Since $\mathcalboondox{s}$ is bounded, then by \eqref{jump2} we have
\[
|I_3(R)|\leq \|\mathcalboondox{s}\|_\infty R \int_{|\omega|\geq MR} |\widehat{u}(\omega)|^2\,d\omega \leq \frac{\|\mathcalboondox{s}\|_\infty}{M} \left(\frac{\mathcal{J}(u)}{2\pi^2} +\frac{\varepsilon}{6}\right).
\]
Thus, taking $M$ big enough, we get that $\displaystyle |I_3(R)|\leq \frac{\varepsilon}{3}$. 

Finally, let us study the middle integral $I_2(R)$. To begin with, we can assume that $\eta$ and $M$ have been chosen, respectively, small enough and big enough such that
\begin{equation}\label{first bound I_2}
   \left| I_2(R)-  \frac{\mathcal{J}(u)}{2\pi^2}  \int_0^\infty  \frac{\mathcalboondox{s}(t)}{t^2}\,dt \right| \leq \left| I_2(R)-  \frac{\mathcal{J}(u)}{2\pi^2}  \int_\eta^M \frac{\mathcalboondox{s}(t)}{t^2}\,dt \right|+\frac{\varepsilon}{6}. 
\end{equation}

Now, consider a partition of the interval $[\eta,M]$
\[
\eta=\alpha_0<\alpha_1<\ldots<\alpha_{n-1}<\alpha_n=M,
\]
and for each interval $[\alpha_j,\alpha_{j+1}]$ choose an element $r_j$ of that interval. The Riemann-Stieltjes sum corresponding to the integral
\[
\int_\eta^M \frac{\mathcalboondox{s}(t)}{t^2}\,dt,
\] 
is then given by
\[
 \sum_{j=0}^{n-1} \mathcalboondox{s}(r_j) \left(\frac{1}{\alpha_j}-\frac{1}{\alpha_{j+1}}\right).
\]
Hence, taking a partition fine enough, we get that
\begin{equation*}
    \left| I_2(R)-  \frac{\mathcal{J}(u)}{2\pi^2}  \int_\eta^M \frac{\mathcalboondox{s}(t)}{t^2}\,dt \right|\le\left| I_2(R)-  \frac{\mathcal{J}(u)}{2\pi^2}  \sum_{j=0}^{n-1} \mathcalboondox{s}(r_j) \left(\frac{1}{\alpha_j}-\frac{1}{\alpha_{j+1}}\right)\right|+\frac{\varepsilon}{12}. 
\end{equation*}
Moreover, since $\mathcalboondox{s}$ is uniformly continuous in $[\eta, M]$, we can choose the partition fine enough such that it also holds
\[
\left|I_2(R)- \Gamma(R) \right| \leq \frac{\varepsilon}{12},
\]
where
\begin{equation*}
\begin{split}
    \Gamma(&R)=R \sum_{j=0}^{n-1} \int_{\alpha_j R\le|\omega|\le\alpha_{j+1} R} |\hat{u}(\omega)|^2 \mathcalboondox{s}\left(r_j\right) \,d\omega\\
    &=\sum_{j=0}^{n-1} \mathcalboondox{s}\left(r_j\right) \left( \frac{\alpha_j R}{\alpha_j} \int_{|\omega|\ge\alpha_j R} |\hat{u}(\omega)|^2  \,d\omega - \frac{\alpha_{j+1} R}{\alpha_{j+1}} \int_{|\omega|\ge\alpha_{j+1} R} |\hat{u}(\omega)|^2  \,d\omega \right).
\end{split}
\end{equation*}
Now, with fixed $n$, and fixed $\eta$ and $M$, by \eqref{jump2} we can choose $R$ large enough such that for any $j=0, \ldots, n$
\begin{equation*}
    \left|\alpha_j R\int_{|\omega|\ge\alpha_j R} |\hat{u}(\omega)|^2  \,d\omega-\frac{\mathcal{J}(u)}{2\pi^2}\right|\le\frac{\varepsilon\eta}{24n\|\mathcalboondox{s}\|_\infty},
\end{equation*}
from which follows
\[
\left|\Gamma(R)-  \frac{\mathcal{J}(u)}{2\pi^2} \sum_{j=0}^{n-1} \mathcalboondox{s}\left(\frac{r_j}{R}\right) \left(\frac{1}{\alpha_j}-\frac{1}{\alpha_{j+1}}\right) \right|\leq \frac{\varepsilon}{12}.
\]
All together, 
\[
\left| I_2(R)-  \frac{\mathcal{J}(u)}{2\pi^2}  \int_0^\infty  \frac{\mathcalboondox{s}(t)}{t^2}\,dt \right| \leq \frac{\varepsilon}{3},
\]
as desired.

We will now prove ii.). Let $\alpha=1$. Given $\varepsilon>0$, choose $\eta>0$ small enough such that $|\mathcalboondox{s}(|\omega|/R) - |\omega|/R| \le\varepsilon|\omega|/R$ in the region $|\omega|<\eta R$. By the triangular inequality,
\begin{equation*}
\begin{split}
       &\left| \frac{R}{\log R} \int_{|\omega|< \eta R} |\widehat{u}(\omega)|^2 \mathcalboondox{s}(|\omega|/R) \, d\omega-\frac{\mathcal{J}(u)}{2\pi^2}\right|\\
        &\le \frac{R}{\log R}\left|\int _{|\omega|< \eta R}|\widehat{u}(\omega)|^2\left(\mathcalboondox{s}\left(\frac{|\omega|}{R}\right)-\left(\frac{|\omega|}{R}\right)\right)\,d\omega\right|\\
        &+\left|\frac{1}{\log R}\int _{|\omega|< \eta R}|\omega| |\widehat{u}(\omega)|^2\,d\omega-\frac{\mathcal{J}(u)}{2\pi^2}\right|\\
        &\le \frac{\varepsilon}{\log R}\left|\int _{|\omega|< \eta R}|\widehat{u}(\omega)|^2|\omega|\,d\omega\right|+\left|\frac{1}{\log R}\int _{|\omega|< \eta R}|\omega| |\widehat{u}(\omega)|^2\,d\omega-\frac{\mathcal{J}(u)}{2\pi^2}\right|
\end{split}
\end{equation*}
By Proposition~\ref{formulas Jump1}, as $R\to \infty$ the second summand in the right-hand side tends to zero and the first summand to $\varepsilon \mathcal{J}(u)/(2\pi^2)$. 

On the other hand,
\begin{equation*}
    \left|\frac{R}{\log R} \int_{|\omega|\ge \eta R} |\widehat{u}(\omega)|^2 \mathcalboondox{s}(|\omega|/R)\, d\omega\right|\le \|\mathcalboondox{s}\|_\infty\frac{\eta R}{\eta\log R} \int_{|\omega|\ge \eta R} |\widehat{u}(\omega)|^2 \, d\omega,
\end{equation*}
and the right-hand side tends to zero as $R\to \infty$ by \eqref{jump2}. Then,
\begin{equation*}
     \lim_{R\to +\infty}\left| \frac{R}{\log R} \int_{\R^d} |\widehat{u}(\omega)|^2 \mathcalboondox{s}(|\omega|/R) \, d\omega-\frac{\mathcal{J}(u)}{2\pi^2}\right|\le \frac{\mathcal{J}(u)}{2\pi^2}\varepsilon,
\end{equation*}
and letting $\varepsilon\to 0$ we get the desired result. \qed

\medskip

One may have expected, in analogy with Proposition~\ref{formulas Jump1}, to see also the case $\alpha\in [0,1)$ in the statement of Theorem~\ref{Jump con Fourier}. The reason why it is not there is that for such a range the (properly normalized) limit is not proportional only to the jump, but to the whole Fourier fractional norm, as precisely stated in the following proposition.

\begin{pro}\label{Jump con Fourier alpha<1}
Let $\mathcalboondox{s}:[0,\infty)\to\mathbb R$ be a bounded continuous function such that, for some $\alpha\in(0,1)$,
\[
\mathcalboondox{s}(t)=t^\alpha+o(t^\alpha)
\qquad\text{as }t\to0.
\]
If $u\in \mathrm{BV}(\mathbb R^d)\cap L^\infty(\mathbb R^d)$, then
\begin{equation}\label{eq Fourier Jump3}
    \lim_{R\to\infty}
R^\alpha
\int_{\mathbb R^d}
|\widehat u(\omega)|^2
\mathcalboondox{s}(|\omega|/R)\,d\omega
=
\int_{\mathbb R^d}
|\widehat u(\omega)|^2|\omega|^\alpha\,d\omega .
\end{equation}
\end{pro}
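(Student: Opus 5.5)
The idea is to rewrite the left-hand side of \eqref{eq Fourier Jump3} as
\[
R^\alpha \int_{\R^d} |\widehat u(\omega)|^2 \mathcalboondox{s}(|\omega|/R)\,d\omega=\int_{\R^d} |\widehat u(\omega)|^2 |\omega|^\alpha\, h_R(\omega)\,d\omega,
\qquad h_R(\omega):=\frac{\mathcalboondox{s}(|\omega|/R)}{(|\omega|/R)^\alpha},
\]
and then pass to the limit by dominated convergence. The integrable majorant comes from the fact that $|\widehat u|^2|\omega|^\alpha\in L^1(\R^d)$. This follows from Proposition~\ref{formulas Jump1}(i) (equivalently Lemma~\ref{lemma_inclusion}): part (i) shows that $\int_{|\omega|\ge R}|\widehat u|^2|\omega|^\alpha\,d\omega$ is finite for $R$ large, since it is comparable to $R^{\alpha-1}\mathcal{J}(u)$. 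Moreover $\widehat u\in L^2$ since $u\in L^1\cap L^\infty\subset L^2$, so the part $|\omega|\le R$ is trivially finite. Hence $\int|\widehat u|^2|\omega|^\alpha<\infty$.

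First I check pointwise convergence. For fixed $\omega\neq0$ we have $|\omega|/R\to0$, so the hypothesis $\mathcalboondox{s}(t)=t^\alpha+o(t^\alpha)$ gives $h_R(\omega)\to1$.

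Next I need a uniform bound on $h_R$. Choose $\delta>0$ with $|\mathcalboondox{s}(t)|\le 2t^\alpha$ for $t\le\delta$. For $t\ge\delta$ we have $|\mathcalboondox{s}(t)|/t^\alpha\le\|\mathcalboondox{s}\|_\infty\delta^{-\alpha}$. Thus $\sup_{t>0}|\mathcalboondox{s}(t)|t^{-\alpha}=:K<\infty$, so $|h_R(\omega)|\le K$ for all $R$ and $\omega$. The integrand is therefore dominated by $K|\widehat u(\omega)|^2|\omega|^\alpha\in L^1$, and dominated convergence yields \eqref{eq Fourier Jump3}.

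There is no real obstacle here. The only step with content is the integrability of $|\widehat u|^2|\omega|^\alpha$, which is exactly where $\alpha<1$ is used and which is supplied by Proposition~\ref{formulas Jump1}(i). This also explains the contrast with Theorem~\ref{Jump con Fourier}: for $\alpha\ge1$ this majorant is not integrable, and the mass escaping to frequencies of order $R$ produces the jump term.
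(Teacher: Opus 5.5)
Your proof is correct, and it takes a slightly cleaner route than the paper's. The paper splits the frequency domain at $|\omega|=\eta R$. On $|\omega|\le\eta R$ it uses $|\mathcalboondox{s}(t)-t^\alpha|\le\varepsilon t^\alpha$. On $|\omega|>\eta R$ it bounds $\mathcalboondox{s}$ by $\|\mathcalboondox{s}\|_\infty$ and invokes the Beretti--Gennaioli tail asymptotics \eqref{jump2} to get $R^\alpha\int_{|\omega|>\eta R}|\widehat u|^2\,d\omega=O(R^{\alpha-1})$, and it separately uses that the tail of $\int|\widehat u|^2|\omega|^\alpha$ vanishes.

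You instead note that boundedness of $\mathcalboondox{s}$ together with its behaviour at $0$ gives the global bound $|\mathcalboondox{s}(t)|\le K t^\alpha$. The whole statement then becomes a single application of dominated convergence with majorant $K|\widehat u|^2|\omega|^\alpha$. This shows that \eqref{jump2} is not actually needed. The only input is $\int|\widehat u|^2|\omega|^\alpha<\infty$, so the conclusion holds for every $u\in H^{\alpha/2}(\R^d)$, and $\mathrm{BV}\cap L^\infty$ enters only through Lemma~\ref{lemma_inclusion}.

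For that reason, citing Lemma~\ref{lemma_inclusion} (as the paper does) is preferable to citing Proposition~\ref{formulas Jump1}(i), which goes back through \eqref{eq BeGe}. With the lemma, your argument is entirely elementary. The paper's splitting only gives an explicit $O(R^{\alpha-1})$ rate for the high-frequency piece. That does not produce an overall rate, since the low-frequency error is controlled only by $\varepsilon\int|\widehat u|^2|\omega|^\alpha$.
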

Observe that the right-hand side of \eqref{eq Fourier Jump3} is the Sobolev seminorm $H^{\alpha/2}$. Such a semi-norm is automatically finite for bounded $u$ of bounded variation and $\alpha\in(0,1)$. We couldn't find a simple explicit reference for such a property in the literature, hence we include it here with a self contained proof for the reader's convenience.

\begin{lem}\label{lemma_inclusion}
    For $0\le \alpha<1$ one has the strict inclusion $$\mathrm{BV}(\R^d)\cap L^\infty(\R^d)\subset  H^{\alpha/2}(\R^d).$$
\end{lem}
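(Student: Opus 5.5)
We prove the inclusion and its strictness separately.

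\textbf{Inclusion.} Fix $u\in \mathrm{BV}(\R^d)\cap L^\infty(\R^d)$. Then $u\in L^1\cap L^\infty\subset L^2$, so the low frequencies are harmless: $\int_{|\omega|\le 1}|\widehat u(\omega)|^2|\omega|^\alpha\,d\omega\le \|u\|_2^2$. For the high frequencies we avoid Proposition~\ref{formulas Jump1}, whose statement already presupposes the finiteness we want, and prove a tail bound directly. We aim for
\[
\int_{|\omega|\ge R}|\widehat u(\omega)|^2\,d\omega\le \frac{C}{R}\,\|u\|_\infty\|u\|_{\mathrm{BV}},\qquad R\ge 1.
\]
\begin{enumerate}
\item[1.] For $|h|\le 1/R$, the standard BV translation estimate gives $\|u(\cdot+h)-u\|_1\le |h|\,\|u\|_{\mathrm{BV}}$.
\item[2.] Since $|u(\cdot+h)-u|\le 2\|u\|_\infty$, it follows that $\|u(\cdot+h)-u\|_2^2\le 2\|u\|_\infty |h|\,\|u\|_{\mathrm{BV}}$.
\item[3.] By Plancherel, $\|u(\cdot+h)-u\|_2^2=\int|\widehat u(\omega)|^2\,|e^{2\pi i h\cdot\omega}-1|^2\,d\omega$.
\item[4.] Average over $h$ in the ball $B(0,1/R)$. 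The averaged multiplier $\fint_{B(0,1/R)}|e^{2\pi i h\cdot\omega}-1|^2\,dh$ is bounded below by a constant $c_d>0$ whenever $|\omega|\ge R$. This holds by scaling: the multiplier depends only on $|\omega|/R$, is continuous and positive away from $0$, and tends to $2$ as $|\omega|/R\to\infty$ by the Riemann--Lebesgue lemma.
\item[5.] Combining steps 2--4 gives the tail bound with $C=2/c_d$.
\end{enumerate}
A dyadic decomposition then finishes the inclusion:
\[
\int_{|\omega|\ge 1}|\widehat u|^2|\omega|^\alpha\,d\omega\le\sum_{k\ge0}2^{(k+1)\alpha}\int_{|\omega|\ge 2^k}|\widehat u|^2\,d\omega\le C'\sum_k 2^{k(\alpha-1)}<\infty,
\]
and the sum converges precisely because $\alpha<1$. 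For $\alpha=0$ the claim is simply $u\in L^2$.

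\textbf{Strictness.} We need, for each $\alpha\in[0,1)$, a function in $H^{\alpha/2}$ that is not in $\mathrm{BV}\cap L^\infty$. One option is an unbounded function: take $u(x)=\log\log(e+1/|x|)\,\psi(x)$ with $\psi$ a smooth cutoff, in $d\ge 2$. This function lies in $W^{1,2}$, hence in $H^{\alpha/2}$, but not in $L^\infty$. For $d=1$ this particular $u$ is not in $H^1$, so a different example is needed there. A cleaner choice that works in every dimension is $\chi_\Omega$ for a bounded set $\Omega$ with fractal boundary, such as a Koch-type domain or a countable union of disjoint balls with radii $r_k$ satisfying $\sum r_k^{d-1}=\infty$ but $\sum r_k^{d-\alpha}<\infty$. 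Such a set has infinite perimeter, so $\chi_\Omega\notin\mathrm{BV}$. On the other hand, the $H^{\alpha/2}$ seminorm is comparable to the Gagliardo seminorm, which is subadditive over disjoint balls up to interaction terms, and for $B(0,r)$ it scales like $r^{d-\alpha}$. Hence $\chi_\Omega\in H^{\alpha/2}$.

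The main obstacle is controlling the interaction terms in the Gagliardo seminorm for this last example. The simplest way to do so is to place the balls far apart, using an unbounded but finite-measure arrangement, so that the cross terms are summable.
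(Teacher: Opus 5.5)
Your proof is correct, and for the inclusion it takes a different route from the paper. The paper stays in physical space. By Gagliardo's characterization it splits $\iint |f(x+y)-f(y)|^2|x|^{-d-\alpha}\,dx\,dy$ into two regions. On $|x|>1$ the integral is bounded by $4\|f\|_2^2\int_{|x|>1}|x|^{-d-\alpha}\,dx$, which needs $\alpha>0$. On $|x|\le 1$ it uses exactly your steps 1--2, $\|f(\cdot+x)-f\|_2^2\le 2\|f\|_\infty\,|x|\,|Df|(\R^d)$, which leaves the weight $|x|^{1-d-\alpha}$, integrable near $0$ precisely because $\alpha<1$.

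You feed the same two ingredients into Plancherel and average the multiplier over $B(0,1/R)$. This gives the quantitative tail bound $\int_{|\omega|\ge R}|\widehat u|^2\,d\omega\lesssim \|u\|_\infty\|u\|_{\mathrm{BV}}/R$, i.e.\ $u\in B^{1/2}_{2,\infty}$, and you then sum dyadically. The paper's argument is shorter, but it leans on the equivalence of the Gagliardo seminorm with $\int|\widehat u|^2|\omega|^\alpha\,d\omega$. Yours never leaves the Fourier side, covers $\alpha=0$ in the same stroke, and gives a non-asymptotic bound of the same order as \eqref{jump2} without invoking Beretti--Gennaioli.

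One correction to your motivation: Proposition~\ref{formulas Jump1}(i) does not presuppose finiteness. The integrand is nonnegative, and the integration by parts of Proposition~\ref{global}, done on $[R,T]$ with $T\to\infty$, proves finiteness from the case $\alpha=2$. The real reason to avoid that route is that it rests on the Beretti--Gennaioli theorem.

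On strictness, the paper asserts it but gives no argument, so your examples fill a genuine omission, and both work. The obstacle you flag at the end is not actually there. For disjoint $A,B$ one has $[\chi_{A\cup B}]^2=[\chi_A]^2+[\chi_B]^2-4\int_A\int_B|x-y|^{-d-\alpha}\,dx\,dy$. So the cross terms carry a favourable sign, and $[\chi_\Omega]^2\le[\chi_{B(0,1)}]^2\sum_k r_k^{d-\alpha}$ for any disjoint family of balls. Separating the balls is only a convenient way to see that the perimeter is $\sum_k\mathcal H^{d-1}(\partial B_k)=\infty$.

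Note also that the standard Koch snowflake lies in $H^{\alpha/2}$ only for $\alpha<2-\log 4/\log 3$ (see the remark after Theorem~\ref{asymptotic behavior}). For $\alpha$ close to $1$, use the union of balls instead, e.g.\ $r_k=k^{-1/(d-1)}$ for $d\ge2$ and $r_k=2^{-k}$ for $d=1$.

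Finally, since $\mathrm{BV}(\R^d)\subset L^1(\R^d)$ in the paper's convention, the smooth bounded function $(1+|x|^2)^{-d/2}\in H^1(\R^d)\setminus L^1(\R^d)$ already shows strictness. Your examples are more informative, though, because they exhibit a failure of local regularity.
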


\begin{proof}
 By Gagliardo's characterization of functions in fractional Sobolev spaces, it is enough to see that for a bounded function $f$ of bounded variation,
\[
[f]_{H^{\alpha/2}}^2=\int_{\mathbb{R}^{d}}\int_{\mathbb{R}^{d}}
\frac{|f(x+y)-f(y)|^2}{|x|^{d+\alpha}}\,dx\,dy < +\infty.
\]

We can split the integral above according to whether $|x|>1$ or $|x|\le 1.$ For the first part, since
\[
\int_{\R^d} |f(x+y)-f(y)|^2 dy \le \int_{\R^d} 2|f(x+y)|^2 dy + \int_{\R^d} 2|f(y)|^2 dy = 4\|f\|_2^2,
\]
we get
\[
\int_{|x|>1} \left( \int_{\R^d} \frac{|f(x+y)-f(y)|^2}{|x|^{d+\alpha}} dy \right) dx \le 4\|f\|_2^2 \int_{|x|>1} \frac{1}{|x|^{d+\alpha}} dx,
\]
which is finite because $\alpha>0$ and $f\in L^2(\R^d)$.

For the other part of the integral, we can reason as follows. First, because $f \in L^\infty(\mathbb{R}^d)$, we have the trivial bound,
\[
\int_{\R^d} \frac{|f(x+y)-f(y)|^2}{|x|^{d+\alpha}} dy \le 2\Vert f\Vert_\infty \int_{\mathbb{R}^{d}}
\frac{|f(x+y)-f(y)|}{|x|^{d+\alpha}}\,dy.
\]
Moreover, by standard properties of functions of bounded variation\footnote{the inequality is trivially true through the fundamental theorem of calculus for smooth functions, and extends to BV functions by a classical approximation argument, see, e.g., \cite[Chapter 5]{EvGar}}, we have \[\|f(\cdot + x) - f\|_{L^1} \le |x| |Df|(\mathbb{R}^d).
\]
It follows, 
\[
\int_{|x|\le 1} \left( \int_{\R^d} \frac{|f(x+y)-f(y)|^2}{|x|^{d+\alpha}} dy \right) dx\le 2\Vert f\Vert_\infty\Vert f\Vert_{BV} \int_{|x|\le 1}  \frac{1}{|x|^{d+\alpha-1}} \, dx,
\]
which is finite since $\alpha<1$.
\end{proof}

\noindent\textbf{Proof of Proposition~\ref{Jump con Fourier alpha<1}.}
By Lemma~\ref{lemma_inclusion},
\[
A_\alpha(u)
:=
\int_{\mathbb R^d}
|\widehat u(\omega)|^2|\omega|^\alpha\,d\omega
<+\infty,
\]
since it is equivalent to the Gagliardo seminorm of $u$.
Let $\varepsilon>0$. Since
\[
\mathcalboondox{s}(t)=t^\alpha+o(t^\alpha)
\qquad\text{as }t\to0,
\]
there exists $\eta>0$ such that, for every $t\in[0,\eta]$,
\begin{equation}\label{small t estimate}
  \left|\mathcalboondox{s}(t)-t^\alpha\right|
\le
\varepsilon t^\alpha .  
\end{equation}
By triangular inequality,
\[
\begin{split}
&\left|
R^\alpha
\int_{\mathbb R^d}
|\widehat u(\omega)|^2
\mathcalboondox{s}(|\omega|/R)\,d\omega
-
A_\alpha(u)
\right| \\
&\le
\left|
I_1(R)
-
\int_{|\omega|\le \eta R}
|\widehat u(\omega)|^2|\omega|^\alpha\,d\omega
\right|
+
|I_2(R)|
+
\int_{|\omega|>\eta R}
|\widehat u(\omega)|^2|\omega|^\alpha\,d\omega,
\end{split}
\]
where
\[
I_1(R)
:=
R^\alpha
\int_{|\omega|\le \eta R}
|\widehat u(\omega)|^2
\mathcalboondox{s}(|\omega|/R)\,d\omega
\]
and
\[
I_2(R)
:=
R^\alpha
\int_{|\omega|> \eta R}
|\widehat u(\omega)|^2
\mathcalboondox{s}(|\omega|/R)\,d\omega .
\]
Now, observe that the contribution of $I_2(R)$ vanishes in the limit as $R\to\infty$. Indeed, since
$\mathcalboondox{s}$ is bounded, we have
\[
|I_2(R)|
\le
\|\mathcalboondox{s}\|_\infty
R^\alpha
\int_{|\omega|>\eta R}
|\widehat u(\omega)|^2\,d\omega .
\]
By \eqref{jump2}, we know that
\[
\int_{|\omega|>\eta R}
|\widehat u(\omega)|^2\,d\omega
=
O\left(\frac1R\right),
\qquad\text{as }R\to\infty .
\]
Therefore $|I_2(R)|
\le
C R^{\alpha-1}$, which vanishes as $R\to\infty$ because $\alpha<1$.

Moreover, since $A_\alpha(u)<+\infty$, we also have
\[
\lim_{R\to\infty}\int_{|\omega|>\eta R}
|\widehat u(\omega)|^2|\omega|^\alpha\,d\omega= 0.
\]
Finally, on the region
$|\omega|\le \eta R$, we have
\[
\left|
R^\alpha \mathcalboondox{s}(|\omega|/R)-|\omega|^\alpha
\right|
=
R^\alpha
\left|
\mathcalboondox{s}(|\omega|/R)-(|\omega|/R)^\alpha
\right|
\le
\varepsilon |\omega|^\alpha .
\]
Consequently,
\[
\begin{split}
\left|
I_1(R)
-
\int_{|\omega|\le \eta R}
|\widehat u(\omega)|^2|\omega|^\alpha\,d\omega
\right|
&\le
\varepsilon
\int_{|\omega|\le \eta R}
|\widehat u(\omega)|^2|\omega|^\alpha\,d\omega \\
&\le
\varepsilon A_\alpha(u),
\end{split}
\]
and since $\varepsilon>0$ is arbitrary, the result follows. \qed

\medskip

% Observe that in the previous proof, if $u$ is more regular, we have with the same proof (considering $\alpha=2$) the following result
% 
% \begin{teo}\label{teo smooth}
% Let $\mathcalboondox{s}:\R\to\R$ be a bounded even continuous function such that
% \begin{equation}\label{Taylor 2 suave}
% \mathcalboondox{s}(t)=t^{2} + o(t^{2}).
% \end{equation}
% If $u\in W^{1,2}$, then
%
%\[
% \lim_{L\to\infty}L^{2} \int_{\R   ^d} |\widehat{u}(\omega)|^2 \mathcalboondox{s}(|\omega|/L)\, d\omega=\int_{\R^d} |\nabla u|^2\, dx.
%\]
% \end{teo}

We end this section by proving
Theorem~\ref{Jump con diferencias}, which provides a family of nonlocal asymptotic formulas for the Jump.

\medskip

\noindent\textbf{Proof of Theorem~\ref{Jump con diferencias}.}
Applying Lemma~\ref{divided} to the function $f = \rho_L$,
\begin{equation*}
\begin{split}
     &\iint_{\R^d\times\R^d}
\bigl(u(x)-u(y)\bigr)^2\,\rho_L(x-y)\,dx\,dy=2\int_{\R^d}(\widehat{\rho_L}(0)- \widehat{\rho_L}(\omega)) |\widehat{u}(\omega)|^2\, d\omega\\
&=2\int_{\R^d}(\widehat{\rho}(0)- \widehat{\rho}(\omega/L)) |\widehat{u}(\omega)|^2\, d\omega=2\int_{\R^d}\mathcalboondox{s}(|\omega|/L)|\widehat{u}(\omega)|^2\, d\omega,
\end{split}
\end{equation*}
% \begin{align*}
% &\widehat{\rho}(0)\int_{\R^d} |\widehat{u}(\omega)|^2\, d\omega - \int_{\R^d} \widehat{\rho}(\omega)|\widehat{u}(\omega)|^2\, d\omega\\
% &=\frac{1}{2}\iint_{\R^d\times\R^d} 
% \bigl|u(x)-u(y)\big|^2\,\rho(x-y)\,dx\,dy.
% \end{align*}
% As $\rho$ is radial,  we recall that there is a $\eta$ such that 
% $\widehat{\rho}(0)-\widehat{\rho}(w)=\eta(0)-\eta(|w|)$.
where $\mathcalboondox{s}(t):=g(0)-g(t)$. Since the function $\mathcalboondox{s}$ satisfies the hypothesis of Theorem~\ref{Jump con Fourier}, if $\alpha>1$, we get
\[
\begin{split}
\lim_{L\to\infty}& L\iint_{\R^d\times\R^d}
\bigl(u(x)-u(y)\bigr)^2\,\rho_L(x-y)\,dx\,dy\\
&=\, \frac{\mathcal{J}(u)}{\pi^2}\, \left( \int_0^\infty \frac{g(0)-g(t)}{t^2} \,dt\right),
\end{split}
\]
and if $\alpha=1$, 
\[
\lim_{L\to\infty}\frac{L}{\log L}\iint_{\R^d\times\R^d}
\bigl(u(x)-u(y)\bigr)^2\,\rho_L(x-y)\,dx\,dy = \frac{\mathcal{J}(u)}{\pi^2}  .
\]\qed

\section{Applications to hyperuniform point processes}\label{Appl to PP}

A point process in $\R^d$ is a measurable map 
\[\mathcal{X}:\Omega \longrightarrow M_p(\R^d)\]
where $(\Omega, \mathcal F,\mathbb P)$ is a probability space
and $M_p(\R^d)$ is the space of locally finite point measures with the 
$\sigma-$algebra generated by the evaluations and it can be 
also seen as a random locally finite point set, see \cite{HoKrPeVi}.

We are going to consider only simple point processes (no coincident points) which are invariant through rigid motions 
(stationary and isotropic). For stationary point processes, according to Campbell's formula,
\[
\mathbb E[\mathcal{X}(f)]=\rho_1 \int_{\R^{d}} f(x)\,dx,
\]
namely, the first intensity measure is a constant multiple of the Lebesgue measure, and the constant $\rho_1\in \R$ is called the first intensity. On the other hand, given two test functions $u,v\in \mathcal{C}_c(\mathbb R^d)$, the stationarity implies that
\[
\Cov(\mathcal{X}(u), \mathcal{X}(v))=\int_{\R^d} \left(\int_{\R^d} u(x)v(x+y)  d\mathcal{C}_0(y)  \right)\, dx,
\]
where $\mathcal{C}_0$ is a positive definite measure, usually called \emph{the reduced covariance measure} (see \cite[Cor.~10.4.IV]{MR950166}). This measure $\mathcal{C}_0$ is also invariant under rotations, and it can be written in terms of the first and second intensity functions as follows:
\[
\mathcal{C}_0=\rho_1 \delta_{0} + (\rho_2(0,|x|)-\rho_1^2) \,dx,
\]
where $\delta_0$ denotes the Dirac delta measure at the point $x=0$. The function 
\[
\rho_2^T(0,|x|)=\rho_2(0,|x|)-\rho_1^2
\]
is called \emph{two point truncated correlation function}. Since our point process is stationary and isotropic, the function $\rho_2^T$ depends only on $|x|$. So, from now on we will assume that it is a function of one real variable and therefore
\[
\mathcal{C}_0=\rho_1 \delta_{0} +\rho_2^T(|x|)\, dx.
\]
Moreover, we will assume that $\rho_2^T$ induces an integrable function in $\R^d$. The Fourier transform of the measure $\mathcal{C}_0$ is a positive measure $\mathcal{S}$, which has different names in the literature: structure measure, Bartlett spectrum, among others. In terms of this measure, we have that
\[
\Var(\mathcal{X}(u))=\int_{\R^d} |\widehat{u}(\omega)|^2 d\mathcal{S}(\omega).
\]
Since $\rho_2^T$ is integrable, the measure $\mathcal{S}$ is absolutely continuous with respect to the Lebesgue measure 
\[
\mathcal{S}=\rho_1 \,\mathcalboondox{s}(|\omega|)\, d\omega,
\]
where the function $\mathcalboondox{s}:\R\to [0,+\infty)$ is given by
\begin{equation}\label{s in terms of correlation function}
    \displaystyle\mathcalboondox{s}(|\omega|)=1+\frac{\widehat{\rho_2^T}(|\omega|)}{\rho_1}.
\end{equation}
We will call this function \emph{structure function}. Note that $\mathcalboondox{s}$ is uniformly continuous and satisfies 
\[
\lim_{t\to\infty} \mathcalboondox{s}(t)=1.
\]
In particular, $\mathcalboondox{s}$ is bounded. Using the structure function, the variance of the linear statistic $\mathcal{X}(u)$ can be written as follows
\begin{equation}\label{eq variance con s}
\Var(\mathcal{X}(u))=\int_{\R^d} |\widehat{u}(\omega)|^2 \,\mathcalboondox{s}(|\omega|)\, d\omega.
\end{equation}

Finally, we will assume that our point process is \emph{hyperuniform}, which is equivalent to saying that $\mathcalboondox{s}(0)=0$. Actually, we will assume a bit stronger assumption, namely that near the origin,
\begin{equation}
\mathcalboondox{s}(t)=c t^\alpha+o(t^\alpha),
\qquad t\to 0,
\end{equation}
for some $c>0$ and $\alpha>0$. For $\alpha>1$, this local behavior, and
the fact that  \emph{$\mathcalboondox{s}(t)$} is bounded imply that

\begin{equation}
\int_0^\infty \frac{\mathcalboondox{s}(t)}{t^2} \,dt<\infty.
\end{equation}
Note that another way to say that $\mathcalboondox{s}(0)=0$ is to say that the \emph{sum charge rule} holds
\begin{equation}\label{eq SCR}
\rho_1+\int_{\R^d} \rho_2^T(|x|)\,dx =0.
\end{equation}

The interested reader is referred to \cite{LacSurvey,coste}, and \cite{MR950166} for more details, and the proofs of the facts presented above. 

\medskip

\textbf{Proof of Theorem~\ref{asymptotic behavior}.} Clearly $\mathrm{BV}(\R^d)\cap L^{\infty}(\R^d)\subset L^1(\R^d)\cap L^2(\R^d)$. Then, for any function $\mathcalboondox{s}$ as in the hypothesis 
the following expression for the variance holds 
\[
\Var(\mathcal{X}(u))=\int_{\R^d} |\widehat{u}(\omega)|^2 \mathcalboondox{s}(\omega)d\omega,
\]
and for the rescaled point process $\mathcal{X}_L$, since $\widehat{u(\cdot/L)}(\omega)=L^d\widehat{u}(L\omega)$, we have
\[
\Var(\mathcal{X}_L(u))=L^d \int_{\R^d} |\widehat{u}(\omega)|^2 \mathcalboondox{s}(|\omega|/L)d\omega.
\]
Then, applying Proposition~\ref{Jump con Fourier alpha<1} in the case for $0<\alpha<1$, and Theorem~\ref{Jump con Fourier}, for $\alpha\ge 1$, gives the result. \qed

% \vskip 2cm

% When $\Omega\subset \R^d$ is a Caccipoli set, then by the De Giorgi structure theorem $\chi_\Omega\in BV$ and all the limits in the theorem above are finite. When $\chi_\Omega\not\in BV,$ and $\Omega$ is not Caccioppoli, the limits (\ref{eq var Jump1}), (\ref{eq var Jump2}) equal infinity. Indeed, our point processes satisfy that
% $\mathcalboondox{s}(|\omega|)=1-\hat{f}(\omega)$ for some $f\in L^1(\R^d)$ and therefore applying Lemma~\ref{divided} (see the proof of Proposition~\ref{prop BV})
% \begin{align*}
% \liminf_{L\to +\infty}\iint_{\R^d\times\R^d}
% \frac{\bigl|\chi_{\Omega}(x)-\chi_{\Omega}(y)\bigr|}{|x-y|}\,\rho_L(x-y)\,dx\,dy=+\infty    
% \end{align*}
% \[
% \liminf_{L\to +\infty}\iint_{\R^d\times\R^d}
% \frac{\bigl|\chi_{\Omega}(x)-\chi_{\Omega}(y)\bigr|}{|x-y|}\,\rho_L(x-y)\,dx\,dy=+\infty
% \]
% by \cite[Theorem 3']{BBM} and the claim follows from the computation in the proof of Theorem~\ref{thm:smooth}. But even in this case iii.) may apply ...

\medskip

\noindent\textbf{Proof of Corollary~\ref{corolari_GEF}.} 
A closed formula for the structure function $\mathcalboondox{s}$ of the planar GEF has been explicitly computed by Nazarov and Sodin in \cite{NS11}. In fact, they proved that for every $u\in L^1(\R^2)\cap L^2(\R^2)$ and every $L>0$, the number variance of the process is given by
\begin{equation}\label{eq:var-L-M}
\frac{\Var(\mathcal X_L(u))}{L}
=
L\int_{\R^2} |\widehat{u}(\omega)|^2\, \mathcalboondox{s}(|\omega|/L)\,d\omega,
\end{equation}
where
\[
\mathcalboondox{s}(t)=\pi^3 t^4 \sum_{n=1}^\infty \frac{1}{n^3} e^{-\pi^2 t^2/n},
\qquad t\ge 0.
\]

Clearly \eqref{eq:var-L-M} in particular holds for $u\in BV(\R^2)\cap L^\infty(\R^2)$ since this space is contained in $ L^1(\R^2)\cap L^2(\R^2)$. 

We claim that the function $\mathcalboondox{s}$ satisfies the assumptions of Theorem~\ref{asymptotic behavior} with
$\alpha=4$. The claim then gives
\begin{equation}\label{eq:M-limit}
   \lim_{L\to\infty}\frac{\Var(\mathcal X_L(u))}{L}
=
\left(\int_0^\infty \frac{\mathcalboondox{s}(t)}{t^2}\,dt\right)\frac{\mathcal J(u)}{2\pi^2}. 
\end{equation}
The integral on the right hand side can be computed explicitly. Indeed, since the summands are nonnegative, Tonelli's theorem yields
\[
\int_0^\infty \frac{\mathcalboondox{s}(t)}{t^2}\,dt
=
\pi^3 \sum_{n=1}^\infty \frac1{n^3}
\int_0^\infty t^2 e^{-\pi^2 t^2/n}\,dt.
\]
Using the standard identity
\[
\int_0^\infty t^2 e^{-a t^2}\,dt=\frac{\sqrt{\pi}}{4a^{3/2}},
\qquad a>0,
\]
with $a=\pi^2/n$, we get
\[
\int_0^\infty \frac{\mathcalboondox{s}(t)}{t^2}\,dt
=
\pi^3 \sum_{n=1}^\infty \frac1{n^3}\cdot \frac{n^{3/2}}{4\pi^{5/2}}
=
\frac{\sqrt{\pi}}{4}\sum_{n=1}^\infty \frac1{n^{3/2}}
=
\frac{\sqrt{\pi}}{4}\,\zeta(3/2).
\]
Substituting into \eqref{eq:M-limit} gives the desired result,
\[
\lim_{L\to\infty}\frac{\Var(\mathcal X_L(u))}{L}
=
\frac{\zeta(3/2)}{8\pi^{3/2}}\,\mathcal J(u).
\]
It only remains to prove the claim. First of all, $\mathcalboondox{s}$ is continuous, because the series appearing in the definition of $M$ is uniformly convergent on compact sets. Moreover, dominated convergence yields
\[
\lim_{t\to 0}\frac{\mathcalboondox{s}(t)}{t^4}
=
\pi^3 \sum_{n=1}^\infty \frac1{n^3}
=
\pi^3\zeta(3).
\]
Therefore
\begin{equation}\label{eq:M-origin}
\mathcalboondox{s}(t)=\pi^3\zeta(3)t^4+o(t^4)
\qquad\text{as }t\to 0.
\end{equation}
To see that $\mathcalboondox{s}$ is bounded on $[0,\infty)$ we can reason as follows.

For $0\le t\le 1$, boundedness is immediate. 
For $t\ge 1$, write
\[
\mathcalboondox{s}(t)
=
\pi^3 t^4 \sum_{n\le t^2}\frac{1}{n^3}e^{-\pi^2 t^2/n}
+
\pi^3 t^4 \sum_{n>t^2}\frac{1}{n^3}e^{-\pi^2 t^2/n}
=:I_1(t)+I_2(t).
\]
For $I_2(t)$, using $e^{-x}\le 1$,
\[
I_2(t) \le \pi^3 t^4 \sum_{n>t^2}\frac{1}{n^3}\lesssim \pi^3 t^4 \int_{t^2}^\infty \frac{1}{x^3}\, dx=\frac{\pi^3}{2}.
\]

For $I_1(t)$, using $e^{-x}\lesssim x^{-3}$ for $x>0$,
\[
e^{-\pi^2 t^2/n}\lesssim \Big(\frac{n}{t^2}\Big)^3,
\]
so that
\[
I_1(t)
\lesssim
t^4 \sum_{n\le t^2}\frac{1}{n^3}\Big(\frac{n}{t^2}\Big)^3
=
t^4 \cdot \frac{1}{t^6} \sum_{n\le t^2}1
= 1.
\]
Thus $\mathcalboondox{s}$ is bounded on $[0,\infty)$.
\qed

\begin{rem}
    In order to apply Theorem~\ref{asymptotic behavior} to derive the asymptotic behavior of the variance of the linear statistic associated with a bounded function of bounded variation of a hyperuniform point process, it is not necessary to have a closed formula for the 
    structure function. It is enough to use the first and second intensity functions instead, see \cite{Hannay} for the GEF.
 \end{rem}

\section{BMO type semi-norms}

In this section we prove our final main result, Theorem~\ref{BMO2 integral}.

Let $Q_0$ denote the unit cube
$
[-\frac12, \frac12)^d.
$ Given $t\in Q_0$ and $U\in O(d)$ let 
\[\mathcal{Q}_\varepsilon (t,U)=\{ \varepsilon  U ( Q_0+t+n)\;:\; n\in \mathbb Z^d \}.\]
Given $u\in \mathrm{BV}(\R^d)\cap L^\infty(\R^d)$ we consider the  BMO-type functional 
\[
\tau_\varepsilon(u)=\varepsilon^{d-1} \left\langle \sum_{Q'\in \mathcal{Q}_\varepsilon (t,U)} \fint_{Q'}\left|u(x)-\fint_{Q'} u(y)\, dy\right|^2\, dx \right\rangle
\]
where for $f:Q_0\times O(d)\rightarrow \R$ integrable
\[\langle f(t,U) \rangle=\int_{Q_0}\int_{\mathrm{O}(d)} f(t,U)  dU\,dt,\]
with $dU$ the Haar measure in $\mathrm{O}(d)$.

% \begin{teo}\label{BMO Jump}
% If $u\in \mathrm{BV}(\R^d)\cap L^\infty(\R^d)$, then there is a universal constant $C$ such that 
% \begin{equation}\label{formula_tau}
% \lim_{\varepsilon\to 0^+} \tau_\varepsilon(u)= C \mathcal{J}(u). 
% \end{equation} 
% \end{teo}

We will prove our last result using a probabilistic argument. 

\noindent\textbf{Proof of Theorem~\ref{BMO2 integral}.} 
Consider the stationary point process
\[
\mathcal{X}=\sum_{n\in\Z^d} \delta_{(n+X+X_n)}, 
\]
where $X$ is a random variable uniformly distributed in the cube $Q_0$, and the random variables $X_n$ are i.i.d. with a distribution law absolutely continuous with respect to the Lebesgue measure, $\phi(x)\,dx$. For the rescaled process $\mathcal{X}_L$, which we recall to be the process $\mathcal{X}$ dilated so that the first intensity is equal to $L^d$, for any $u \in L^1(\R^d) \cap L^2(\R^d)$ (hence in particular for $u\in \mathrm{BV}(\R^d)\cap L^\infty(\R^d)$) it holds \cite{Ya21}
\begin{align*}
&\Var(\mathcal{X}_L(u))\\
&=L^d \int_{\R^d}|\widehat{u}(\omega)|^2(1-|\widehat{\phi}(\omega/L)|^2)\, d\omega+L^{2 d} \sum_{m \in \mathbb{Z}^d \setminus\{0\}} |\widehat{\phi}(m)|^2\cdot|\widehat{u}(L m)|^2.
\end{align*}
From now on, we will assume that $\phi(x)=\chi_{[-1/2,1/2)^d}(x)$, and hence $\widehat{\phi}(m)=0$ for every integer $m\neq 0$.  Therefore, from the previous result, we get

\begin{align*}
\Var(\mathcal{X}_L(u))&=L^d \int_{\R^d}|\widehat{u}(\omega)|^2(1-|\widehat{\phi}(\omega/L)|^2)\, d\omega.
\end{align*}
Now, if we consider $U\in \mathrm{O}(d)$ and define
\[
\mathcal{X}_{L,U}=\sum_{n\in\Z^d} \delta_{\frac{1}{L}U(n+X+X_n)}, 
\]
then
\begin{align}
\Var(\mathcal{X}_{L,U}(u))&=L^d \int_{\R^d}|\widehat{u}(\omega)|^2(1-|\widehat{\phi}(U(\omega/L))|^2)\, d\omega.\label{eq varianza1}
\end{align}
Now the idea is to consider the random point process $\mathcal{X}_{L,U}$, defined as above, but in which not only $X$ and the $X_n$ are random variables, but also $U$ is a uniformly distributed random variable on $\mathrm{O}(d)$, and compute the variance $\Var(\mathcal{X}_{L,U}(u))$ in two different ways. Recall that, by the total variance law (see for instance \cite[Ch. 4]{Du}, we have that

\begin{equation}\label{eq totaldesvariation}
\Var(\mathcal{X}_{L,U}(u))=\E\big(\Var(\mathcal{X}_{L,U}(u)|U)\big)+\Var\big(\E(\mathcal{X}_{L,U}(u)|U)\big).
\end{equation}
First of all, since the first intensity function of $\mathcal{X}_L$ is a constant equal to $L^d$, and the Lebesgue measure is invariant under the action of $\mathrm{O}(d)$ on $\R^d$, we obtain that

\begin{align*}
\E(\mathcal{X}_{L,U}(u)|U)&= L^d \int_{\R^d} u(x) \,dx. 
\end{align*}
In consequence, the second term in \eqref{eq totaldesvariation} is equal to zero, and we have that 

\begin{equation}\label{eq simplification}
\Var(\mathcal{X}_{L,U}(u))=\E\big(\Var(\mathcal{X}_{L,U}(u)|U)\big).
\end{equation}
On the one hand, by \eqref{eq varianza1} we have that
\[
\E\big(\Var(\mathcal{X}_{L,U}(u)|U)\big)=L^d \int_{\mathrm{O}(d)} \int_{\R^d}|\widehat{u}(\omega)|^2(1-|\widehat{\phi}(U\omega/L)|^2)\, d\omega \,dU.
\]
If we define the radial function
\[
\mathcalboondox{s}(\omega)=\int_{\mathrm{O}(d)} (1-|\widehat{\phi}(U\omega)|^2) \,dU
\]
by Fubini's theorem, we get that
\[
\E\big(\Var(\mathcal{X}_{L,U}(u)|U)\big)=L^d  \int_{\R^d}|\widehat{u}(\omega)|^2 \mathcalboondox{s}(\omega/L)\, d\omega.
\]
On the other hand, again by the total variance law, but this time conditioning with respect to the variable $X$ uniformly distributed in $Q_0$, we obtain that

\begin{align*}
\Var(\mathcal{X}_{L,U}(u)|U)&= \E\big(\Var(\mathcal{X}_{L,U}(u)|U,t))+\Var\big(\E(\mathcal{X}_{L,U}(u)|U,t)\big)
\end{align*}

Note that
\begin{align*}
\E(\mathcal{X}_{L,U}(u)|U,t)=\sum_{n\in\Z^d} \int_{\frac1L U(Q_0+t+n)} u(x) \,dx= \int_{\R^d} u(x)\,dx,
\end{align*}

and therefore, the second term in the sum is equal to zero again. With respect to the first term, since the random variables $X_n$ are independent, we get that

\begin{align*}
\Var(\mathcal{X}_{L,U}(u)|U,t)&= \Var\big(\sum_{n\in\Z^d} u(U(X_n+n+t)/L) \big)\\
&=  \sum_{n\in\Z^d} \Var\big(u(U(X_n+n+t)/L) \big)\\
&= \left(\sum_{n\in\Z^d} \fint_{\frac1L U(Q_0+t+n)}\left|u(x)-\fint_{\frac1L U(Q_0+t+n)} u(y) \,dy\right|^2 \,dx\right)
\end{align*}

In consequence

\begin{align*}
&\Var(\mathcal{X}_{L,U}(u)|U)\\
&= \int_{\mathrm{O}(d)} \left(\sum_{n\in\Z^d} \fint_{\frac1L U(Q_0+t+n)}\left|u(x)-\fint_{\frac1L U(Q_0+t+n)} u(y) \,dy\right|^2 \,dx\right) \,dU.
\end{align*}

In conclusion, putting all together and taking $\varepsilon=\frac1L$ we obtain that

\begin{align*}
&\tau_{\frac1L}(u)\\
&= \frac{1}{L^{d-1}} \int_{Q_0}\int_{\mathrm{O}(d)} \left(\sum_{n\in\Z^d} \fint_{\frac{ U(Q_0+t+n)}L}\left|u(x)-\fint_{\frac{U(Q_0+t+n)}L} u(y)  \,dy\right|^2 \,dx\right) \,dU\, dt\\
&=\frac{1}{L^{d-1}}  \E_U\E_t\big(\Var(\mathcal{X}_{L,U}(u)|U,t))=\frac{1}{L^{d-1}} \Var(\mathcal{X}_{L,U}(u))\\
&=L \int_{\R^d}|\widehat{u}(\omega)|^2 \mathcalboondox{s}(\omega/L)\, d\omega.
\end{align*}

Finally, since $\mathcalboondox{s}(\omega)=\frac{\pi^2}{3}|\omega|^2+o(|\omega|^2)$ for $\omega\to 0$, we conclude by Theorem~\ref{Jump con Fourier} that

\[
\lim_{L\to\infty} \tau_{\frac1L}(u)=\lim_{L\to\infty}L \int_{\R^d}|\widehat{u}(\omega)|^2 \mathcalboondox{s}(\omega/L)\, d\omega=C \mathcal{J}(u),
\]
where
\begin{align*}
C=\frac{1}{2\pi^2}\int_0^\infty \frac{\mathcalboondox{s}(t)}{t^2}\, dt.
\end{align*}

For the value of the constant
\[
\mathcalboondox{s}(t)=\int_{\mathrm{O}(d)}\bigl(1-|\widehat{\phi}(tUe_1)|^2\bigr)\,dU
=\frac{1}{\omega_{d-1}}\int_{\mathbb S^{d-1}}\bigl(1-|\widehat{\phi}(tu)|^2\bigr)\,d\sigma_{d-1}(u)
\]
with $e_1$ the first standard basis vector and $\omega_{d-1}=2\pi^{d/2}/\Gamma(d/2)$ the surface area
of $\mathbb S^{d-1}.$ As $|\widehat{\phi}(x)|^2=\prod_{j=1}^d \sinc^2(x_j)=
\widehat{\psi}(x)$ for  $\psi(x)=\prod_{j=1}^d (1-|x_j|)_+$,
and as
$\psi$ is even with integral 1
and
$$\int_{\R^d}\frac{1-\cos(2\pi\,u\cdot x)}{|x|^{d+1}}\,dx =\frac{2\pi^{(d+3)/2}}{\Gamma(\frac{d+1}{2})}\,|u|,$$
we get the result.
\qed

\appendix

\section{Functions of bounded variation in \texorpdfstring{$\mathbb{R}^d$}{Rd}}\label{Appendix}

For function $u \in L^1(\R^d)$ of bounded variation its distributional derivative $Du$ is a finite vector-valued Radon measure, see \cite[Sec.3.7]{AFP00}. 

According to the Lebesgue decomposition theorem, the measure $Du$ admits a unique decomposition into two mutually singular parts: 
$$
Du = D^au + D^su,
$$
where $D^au$ is absolutely continuous with respect to the Lebesgue measure. The singular part $D^su$ is supported in $S_u$, that is,  the approximate discontinuity set of $u$. In other words,  $x\in S_u^c$ if there exists $a\in\R$ such that 
\begin{equation}
    \label{Approximate continuity}
    \lim_{\rho\to 0^+}\fint_{B_\rho(x)}|u(y)-a|\, dy=0.
\end{equation}
The singular part $D^s$ will be further decomposed into two mutually singular parts:
\begin{equation*}
    D^s = D^j u + D^c u,
\end{equation*}
The first one $D^j u$ is called the jump part, while the second one $D^c u$ is called the Cantor part. In consequence, $Du$ is divided into three mutually singular parts:
\begin{equation*}
    Du = D^a u + D^j u + D^c u.
\end{equation*}
Each of these parts is described as follows:

\subsubsection*{The absolutely continuous part $D^a u$}
It represents the "classical" gradient behavior. It is the restriction of $Du$ to the set where the function behaves regularly with respect to the Lebesgue measure $\mathcal{L}^d$, $D^a u = \nabla u \, \mathcal{L}^d$.
Here, $\nabla u$ denotes the Radon-Nikodym derivative of $Du$ with respect to $\mathcal L^d$, which belongs to $L^1(\R^d, \R^d)$. This part vanishes on any set $B \subset \R^d$ such that $\mathcal L^d(B) = 0$.

\subsubsection*{The jump part $D^j u$}
It captures the contribution of sharp discontinuities of $u$ across rectifiable hypersurfaces. Let $J_u$ denote the set of points $x$ where there is a couple of different values $u^+(x), u^-(x)$ and a unit vector $\nu(x)\in \mathbb S^{d-1}$ such that,
$u^+(x)\ne u^-(x)$ and
\begin{equation*}
    \lim_{\rho\to 0^+}\fint_{B_{\rho}^+(x)}|u(y)-u^+(x)|\, dy= \lim_{\rho\to 0^+}\fint_{B_{\rho}^-(x)}|u(y)-u^-(x)|\, dy=0,
\end{equation*}
where
\begin{equation*}
\begin{split}
    B_\rho^+(x)=\{y\in B_\rho(x):\langle y-x,\nu(x)\rangle>0\}, \\
    B_\rho^-(x)=\{y\in B_\rho(x):\langle y-x,\nu(x)\rangle<0\}.
\end{split}
\end{equation*} 
The set $J_u \subseteq S_u$ is called  the \emph{jump set} of $u$. It is a $\mathcal{H}^{d-1}$-rectifiable Borel set  such that $\mathcal{H}^{d-1}(S_u\setminus J_u)=0$.

The jump part of the distributional derivative is the measure defined as:
\begin{equation*}
    D^j u := Du \llcorner J_u = (u^+ - u^-) \otimes \nu_u \, \mathcal{H}^{d-1} \llcorner J_u
\end{equation*}
where $\mathcal{H}^{d-1}$ is the $(d-1)$-dimensional Hausdorff measure. This allows us to rewrite $Du$ as
\begin{equation*}%\label{Gradient decomposition on the jump}
    Du=\nabla u\,\mathcal{L}^d+(u^+-u^-)\nu_{J_u}\left.\mathcal{H}^{d-1}\right|_{J_u}+D^cu.
\end{equation*}

\subsubsection*{The Cantor part $D^c u$}
The Cantor part represents the singular-diffuse component of the derivative. It is the part of the singular measure $D^s u$ that does not charge the jump set. The Cantor part has the property that $D^c u(B) = 0$ for any set $B$ with $\mathcal{H}^{d-1}(B) < \infty$, yet it remains singular with respect to $\mathcal L^d$.

\medskip

As a direct consequence of the mutual singularity of the three measures $\mathcal{L}^d$, $\mathcal{H}^{d-1} \llcorner J_u$, and $D^c u$, the total variation of $u$ in $\Omega$, denoted by $|Du|(\Omega)$, can be computed as the sum of the total variations of its mutually singular components:
\begin{equation*}
    |Du|(\Omega) = \int_{\Omega} |\nabla u| \, dx + \int_{J_u \cap \Omega} |u^+ - u^-| \, d\mathcal{H}^{d-1} + |D^c u|(\Omega).
\end{equation*}

Using these definitions, we are ready to introduce the $L^2$ size of the jump

\begin{fed} Let $u\in \mathrm{BV}(\R^d) \cap L^\infty(\R^d)$, then we define the $L^2$-size of the jump of $u$ as
\begin{equation*}
    \mathcal{J}(u)=\int_{J_u}(u^{+}-u^{-})^2\, d\mathcal{H}^{d-1}.
\end{equation*}
\end{fed}

% One of the main results from \cite{BerGen2024} is the following formula for $\mathcal{J}(u)$.

% \begin{teo}\label{BG main theorem}
%      Let $u\in \mathrm{BV}(\R^d)\cap L^\infty(\R^d)$, then 
%     \begin{equation}\label{eq BG jump}
%         \lim_{R\to+\infty}\frac{1}{R}\int_{|\omega|<R}|\widehat{u}(\omega)|^2\, |\omega|^2\, d\omega = \frac{\mathcal{J}(u)}{2\pi^2}.
%     \end{equation}
% \end{teo}

As we have already mentioned in the introduction, one of the advantages of the space $\mathrm{BV}$ over the Sobolev space $W^{1,1}$ is that the former contains characteristic functions of some sets $\Omega$, which are called Caccioppoli sets. In this case, $\|\chi_\Omega\|_{\mathrm{BV}}$ is called the perimeter of $\Omega$, and we will denote it by $\Per(\Omega)$. This terminology is justified because the structure theorem of De Giorgi, see \cite{Maggi} or \cite{EvGar} for instance:
\[
\Per(\Omega)=\|\chi_\Omega\|_{\mathrm{BV}}=\H^{d-1}(\partial_*\Omega) = \mathcal{J}(\chi_\Omega),
\]
where  $\H^{d-1}$ denotes the $d-1$ dimensional Hausdorff measure and $\partial_*\Omega$ is the essential boundary (or measure-theoretic boundary) of $\Omega$ defined as $\R^d\setminus (I\cup O)$ with
\[
\begin{split}
I=\left\{x\in\R^d:\, \lim_{r\to 0} \frac{|B(x,r)\cap \Omega|}{|B(x,r)|}=1\right\},\\
O=\left\{x\in\R^d:\, \lim_{r\to 0} \frac{|B(x,r)\cap \Omega|}{|B(x,r)|}=0\right\}.
\end{split}
\]

\section*{Acknowledgments}
We would like to thank L. Gennaioli for pointing out an error in a preliminary version of this draft.

\bibliographystyle{plain}

\end{document}